\documentclass[10pt,a4paper]{article}
\usepackage[utf8]{inputenc}
\usepackage{amsmath}
\usepackage{amsfonts}
\usepackage{amssymb}
\usepackage{amsthm}
\usepackage{graphicx}
\usepackage[left=2cm,right=2cm,top=2cm,bottom=2cm]{geometry}
\usepackage{mathrsfs}
\usepackage{enumerate}
\newtheorem{theorem}{Theorem}[section]
\newtheorem{lemma}[theorem]{Lemma}
\newtheorem{cor}[theorem]{Corollary}

\newtheorem{conj}[theorem]{Conjecture}

\begin{document}
\begin{center}
\Large{$(k,H)$-kernels in nearly tournaments}
\end{center}

\begin{center}
Hortensia Galeana-Sánchez and Miguel Tecpa-Galván
\end{center}

\begin{abstract}
Let $H$ be a digraph possibly with loops, $D$ a digraph  without loops, and $\rho : A(D) \rightarrow V(H)$ a coloring of $A(D)$ ($D$ is said to be an $H$-colored digraph). If  $W=(x_{0}, \ldots , x_{n})$ is a walk in $D$, and $i \in \{ 0, \ldots , n-1 \}$, we say that there is an obstruction on $x_{i}$ whenever $(\rho(x_{i-1}, x_{i}), \rho (x_{i}, x_{i+1})) \notin A(H)$ (when $x_{0} = x_{n}$ the indices are taken modulo $n$).
 We denote by $O_{H}(W)$ the set $\{ i \in \{0, \ldots , n-1 \} :$ there is an obstruction on $x_{i} \}$. The $H$-length of $W$, denoted by $l_{H}(W)$,  is defined by $|O_{H}(W)|+1$ whenever $x_{0} \neq x_{n}$, or $|O_{H}(W)|$ in other case. 

A $(k, H)$-kernel of an $H$-colored digraph $D$ ($k \geq 2$) is a subset of vertices of $D$, say $S$, such that, for every pair of different vertices in $S$, every path between them has $H$-length at least $k$, and for every vertex $x \in V(D) \setminus S$ there exists an $xS$-path with $H$-length at most $k-1$. This concept widely generalize previous nice concepts as kernel, $k$-kernel, kernel by monochromatic paths, kernel by properly colored paths, and $H$-kernel. 

In this paper, we will study the existence of $(k,H)$-kernels in interesting classes of digraphs, called nearly tournaments, which have been large and widely studied due its applications and theoretical results. We will show several conditions that guarantee the existence of $(k,H)$-kernel in tournaments, $r$-transitive digraphs, $r$-quasi-transitive digraphs, multipartite tournaments, and local tournaments.
\end{abstract}
\textbf{MSC-class}: 05C15, 05C20, 05C69.

\section{Introduction}

A digraph $D$ is \emph{semicomplete} if every two different vertices in $D$ are joined by at least one arc. 
A \emph{tournament} is a semicomplete digraph without symmetric arcs.
 The class of tournaments is one of the most well-studied classes of digraphs with many deeply and important results and applications, and provide a useful class of digraphs which allows of to have a first approach to solve very difficult problems.
As a consequence, several authors defined some new classes of digraphs which are generalizations of tournaments, commonly called nearly tournaments.
For instance, the class of \emph{local in-tournaments} (\emph{local out-tournaments}) was introduced by Bang-Jensen in \cite{localtour}, as a digraph $D$ such that for every $x \in V(D)$, the induced subdigraph of $N^{-}(x)$ (respectively $N^{+}(x)$) is a tournament. A \emph{local tournament} is a digraph which is both local in-tournament and local out-tournament. Several results related with local tournaments have been proved. For a deeply study of local tournaments, see \cite{bangjen}.  

Other digraphs related with nearly tournaments are the multipartite tournaments. An \emph{$r$-partite tournament} ($r \geq 2$) is a digraph $D$ such that $V(D)$ can be partitioned into $r$ disjoint independent sets, and every two vertices in different classes are joined by an asymmetric arc. Multipartite tournaments were considered by Moon in \cite{moon}, and several authors studied such class of digraphs.  For a deeply study of multipartite tournaments, see \cite{bangjen}. 

A digraph $D$ is \textit{$r$-transitive} ($r \geq 2$) if for every $\{u,v\} \subseteq V(D)$, whenever there exists a $uv$-path with length $r$, we have that $(u,v) \in A(D)$. When $r=2$, an $r$-transitive digraph is called \textit{transitive}. The class of $r$-transitive digraphs is one of the most studied nearly tournaments, due their particularly nice structure, and several results related with such digraphs arose. For instance, see \cite{bangjen}, \cite{ktrans3}, \cite{ktrans1} and \cite{ktrans2}.

A digraph $D$ is \textit{quasi-transitive} if for every $\{u,v\} \subseteq V(D)$, whenever there exists a $uv$-path with length $2$, we have that $u$ and $v$ are joined by an arc.
 Quasi-transitive digraphs were introduced by Ghouila-Houri in \cite{quasitrans} as a consequence of their relation with comparability graphs, and it is one of the most studied class of digraphs, probably the main reason is the characterization theorem showed by Bang-Jensen and J. Huang in \cite{qtranschar}. 
 Bang-Jensen \cite{3transintro} introduced the family of $3$-quasi-transitive digraphs in the context of strong arc-locally semicomplete digraphs. A digraph $D$ is \textit{$3$-quasi-transitive} if for every $\{u,v\} \subseteq V(D)$, whenever there exists a $uv$-path with length $3$, we have that $u$ and $v$ are joined by an arc. In an analogous way, a digraph $D$ is \emph{$r$-quasi-transitive} ($r \geq 2$) if for every $\{u,v\} \subseteq V(D)$, whenever there exists a $uv$-path with length $r$, we have that $u$ and $v$ are joined by an arc. This class  were introduced by Galeana-Sánchez and Hernández-Cruz \cite{qtrans1} in the context of $k$-kernels.  For a deeply study of $r$-quasi-transitive digraphs, see \cite{bangjen}. 

The concept of \emph{kernel} was introduced by von Neumann and Morgenstern in \cite{2} as a subset $S$ of vertices of a digraph $D$, such that for every pair of different vertices in $S$, there is no arc between them (that is, $S$ is an \emph{independent set}), and every vertex not in $S$ has at least one out-neighbor in $S$ (that is, $S$ is an \emph{absorbent set}). This concept has been deeply and widely studied by several authors due to a large amount of theoretical and practical applications, by example \cite{ker1}, \cite{ker2}, \cite{ker3} and \cite{ker4}. 
In \cite{3} Chvátal proved that deciding if a digraph has a kernel is an NP-complete problem. As a consequence, several conditions that guarantee the existence of kernels has been showed. For instance, we have the following two classical results:

\begin{theorem}[König \cite{kon}]
\label{kertrans}
Every transitive digraph has a kernel.
\end{theorem}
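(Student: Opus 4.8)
The plan is to exploit the order-theoretic structure that transitivity forces on a loopless digraph. The key preliminary observation is that, in a finite loopless transitive digraph $D$, the arc relation behaves like a strict partial order: it is irreflexive because $D$ has no loops, and transitive by hypothesis. From this I would extract the structural skeleton of $D$ by looking at its strongly connected components. If $u$ and $v$ lie in a common strong component, then there is a directed $uv$-path and a directed $vu$-path; collapsing any such path of length at least $2$ by a single application of transitivity (and noting that a length-$1$ path is already an arc) shows $(u,v),(v,u)\in A(D)$. Hence every strong component induces a complete symmetric digraph, and contracting the components yields an \emph{acyclic} transitive digraph $D^{*}$.

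Next I would build the kernel from the ``top'' of this order. Let $\mathcal{C}$ be the collection of strong components of $D$ that are sinks of $D^{*}$, that is, components with no arc leaving them, and choose exactly one representative vertex from each such component; call the resulting set $S$. (In the genuinely acyclic case, where every component is a single vertex, this is just the classical choice of $S$ as the set of sinks, equivalently the maximal elements of the partial order.)

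Finally I would verify the two kernel conditions. For independence: distinct representatives lie in distinct sink-components, and an arc between two sink-components would contradict the fact that both are sinks in $D^{*}$, so no arc joins two vertices of $S$. For absorption: given $x\notin S$, since $D^{*}$ is finite and acyclic there is a directed path in $D^{*}$ from the component of $x$ to some sink-component $C\in\mathcal{C}$; lifting this to $D$ and continuing, inside $C$, to its representative $p$ produces a directed $xp$-path in $D$, which transitivity collapses to the single arc $(x,p)$, with $p\in S$. Thus $S$ is absorbent. The main obstacle is really the first step: pinning down that transitivity forces each strong component to be complete symmetric and the condensation to be acyclic: this is what rules out the problematic ``odd-cycle'' obstructions to the existence of a kernel. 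Once this skeleton is in place, choosing one representative per terminal component is the natural candidate, and both kernel axioms follow from a single shortcut by transitivity.
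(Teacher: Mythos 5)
Your proof is correct, but note that there is nothing in the paper to compare it against: Theorem~\ref{kertrans} is quoted as König's classical result and the paper gives no proof, using it only as a tool (e.g.\ in Theorem~\ref{tqtrc}). Your argument is essentially the standard one, and all the essential steps are sound. Two points of precision are worth fixing. First, your opening claim that the arc relation is a strict partial order is not literally true: with the path-based definition of transitivity used in this paper (a $uv$-\emph{path} of length $2$, hence with three distinct vertices, forces the arc $(u,v)$), symmetric arcs are permitted and antisymmetry fails --- as your own observation that strong components are complete symmetric already shows. Since your argument never actually invokes the partial-order structure, this is a slip of framing, not of substance. Second, collapsing a path of length $n \geq 3$ to an arc is not ``a single application'' of transitivity but a short induction: from $(x_0,\ldots,x_n)$ one gets $(x_0,x_2)\in A(D)$, and because $x_0,x_2,x_3$ are still pairwise distinct the induction continues to yield $(x_0,x_n)\in A(D)$; this reachability-implies-adjacency fact is exactly Lemma~\ref{ktrans} of the paper with $r=2$. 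Granting these repairs, your skeleton works as stated: each strong component is complete symmetric, the condensation is acyclic, and picking one representative per terminal component gives independence (an arc between representatives would leave a sink component) and absorption (every vertex reaches a terminal component, the lifted path collapses to a single arc, and a non-representative inside a terminal component of size at least two has an arc directly to its representative, while a terminal singleton is itself in $S$). An alternative you might note: the same conclusion also follows from Theorem~\ref{duchet}, since in a transitive digraph every cycle collapses to give symmetric arcs, but your direct construction is more elementary and identifies the kernel explicitly.
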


\begin{theorem}[Duchet \cite{duch}]
\label{duchet}
If $D$ is a digraph and every cycle in $D$ has a symmetric arc, then $D$ has a kernel.
\end{theorem}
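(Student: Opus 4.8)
The plan is to proceed by induction on $|V(D)|$, after first recasting the hypothesis in a more usable form. Call an arc $(u,v)$ \emph{asymmetric} if $(v,u) \notin A(D)$, and let $D_{P}$ denote the spanning subdigraph of $D$ whose arcs are exactly the asymmetric ones. The key observation is that a directed cycle of $D$ has no symmetric arc precisely when every one of its arcs is asymmetric, i.e. when it is a directed cycle of $D_{P}$; hence the hypothesis that every cycle of $D$ has a symmetric arc is equivalent to the statement that $D_{P}$ is acyclic. I would also record that the hypothesis is hereditary: every induced subdigraph inherits it, since a cycle of an induced subdigraph is in particular a cycle of $D$.

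For the induction, the base case $|V(D)| \leq 1$ is immediate. For the inductive step I would invoke the fact that a nonempty acyclic digraph has a source to choose a vertex $t$ with no incoming arc in $D_{P}$; thus every arc of $D$ entering $t$ is symmetric. Deleting $t$ yields a smaller digraph $D - t$ that still satisfies the hypothesis, so by the induction hypothesis it has a kernel $S'$. I would then split into two cases according to whether $t$ is absorbed by $S'$. If there is an arc from $t$ to some vertex of $S'$, then $S'$ is already a kernel of $D$: it is independent (being independent in $D - t$ and not containing $t$), every vertex of $(V(D) \setminus \{t\}) \setminus S'$ is absorbed by $S'$ as in $D - t$, and $t$ itself is absorbed by the assumption of this case.

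The interesting case is when $t$ has no out-neighbor in $S'$, and here I would claim that $S' \cup \{t\}$ is a kernel. Absorbency is routine, since every vertex other than $t$ and the members of $S'$ already lies outside $S'$ in $D - t$ and is absorbed there. The one point that genuinely uses the hypothesis—and which I expect to be the crux of the whole argument—is the independence of $S' \cup \{t\}$: there is no arc from $t$ into $S'$ by the case assumption, and there can be no arc from $S'$ into $t$ either, for such an arc $(w,t)$ with $w \in S'$ would have to be symmetric (as $t$ has no incoming asymmetric arc), forcing $(t,w) \in A(D)$ and hence an out-neighbor of $t$ in $S'$, contrary to the case assumption. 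Thus the entire proof hinges on the reformulation of the hypothesis as the acyclicity of the asymmetric part $D_{P}$, which is exactly what supplies a source $t$ having no incoming asymmetric arc; once that is in hand, the only real work is the short independence check above.
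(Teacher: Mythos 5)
Your proof is correct. Note first that there is nothing in the paper to compare it against: Theorem \ref{duchet} is stated as a known result with a citation to Duchet \cite{duch}, and no proof is given in the text. Your induction is the standard, complete proof of the finite case, and every step checks out: the reformulation of the hypothesis as acyclicity of the spanning subdigraph $D_{P}$ of asymmetric arcs is sound (a cycle of $D$ avoids symmetric arcs exactly when all its arcs are asymmetric, any cycle of $D_{P}$ is a cycle of $D$, and no $2$-cycle can lie in $D_{P}$ since its two arcs would be symmetric); the hypothesis is hereditary because induced subdigraphs preserve both arcs of a symmetric pair; the source $t$ of $D_{P}$ exists by finiteness; and the two-case verification that $S'$ or $S' \cup \{t\}$ is a kernel of $D$ is exactly right. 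In particular, you correctly isolate the crux: in the second case an arc $(w,t)$ with $w \in S'$ must be symmetric (as $t$ has no incoming asymmetric arc), so $(t,w) \in A(D)$ would contradict the case assumption, and this is the only place the choice of $t$ is used. One small remark: Duchet's result in \cite{duch} is actually stronger --- such digraphs are kernel-perfect, i.e.\ every induced subdigraph has a kernel --- but since you observed that the hypothesis is hereditary, your argument yields that stronger conclusion verbatim as well.
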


The concept of kernel has been generalized over the years. A subset $S$ of vertices of $D$ is said to be a \emph{kernel by paths}, if for every $x \in V(D)\setminus S$, there exists an $xS$-path (that is, $S$ is \emph{absorbent by paths}) and, for every pair of different vertices $\{ u, v \} \subseteq S$, there is no $uv$-path in $D$ (that is, $S$ is \emph{independent by paths}). 
This concept was introduced by Berge in \cite{19}, and it is a well known result that every digraph has a kernel by paths \cite{19} (see Corollary 2 on p. 311).
 The concept of $(k,l)$-kernel was introduced by Borowiecki and Kwa\'snik in \cite{14} as follows:
 If $k \geq 2$, a subset $S$ of vertices of a digraph $D$ is a \emph{$k$-independent set}, if for every pair of different vertices in $S$, every walk between them has length at least $k$. If $l \geq 1$, we say that $S$ is an \emph{$l$-absorbent set} if for every $x \in V(D) \setminus S$ there exists an $xS$-walk with length at most $l$.
 If $k \geq 2$ and $l \geq 1 $, a \emph{$(k,l)$-kernel} is a subset of $V(D)$ which is both $k$-independent and $l$-absorbent.
  If $l=k-1$, the $(k,l)$-kernel is called \emph{$k$-kernel}. Notice that every $2$-kernel is a kernel. 
  A $(2,2)$-kernel is called \textit{quasi-kernel}, and it is known that every digraph has a quasi-kernel \cite{qker} (called semi-kernel by the authors).  Several sufficient conditions for the existence of $k$-kernels in nearly tournaments have been proved. For instance, see \cite{localtour1},  \cite{qtrans5}, \cite{qtrans1} and \cite{qtrans4}.

Let $H$ be a digraph possibly with loops, $D$ a digraph  without loops, and $\rho : A(D) \rightarrow V(H)$ a coloring of $A(D)$ ($D$ is said to be an $H$-colored digraph). 
A directed path in $D$ is an \emph{$H$-path}, whenever the consecutive colors encountered on $W$ form a directed walk in $H$
(the concepts of \emph{$H$-cycle} and \emph{$H$-walk} are defined analogously).
If $W=(x_{0}, \ldots , x_{n})$ is an $H$-path, $W$ is said to be an \emph{$x_{0}x_{n}$-$H$-path}. If $S \subseteq V(D)$ and $x_{n}\in S$, we say that $W$ is an \emph{$x_{0}S$-$H$-path}. 
In \cite{8} Linek and Sands introduced the concept of $H$-walk and their work was later considered by several authors, as example, \cite{7}, \cite{17}, \cite{12} and \cite{16}.
 A subset $S$ of vertices of $D$ is \emph{absorbent by $H$-paths}, if for every $x \in V(D) \setminus S$ there exists an $xS$-$H$-path; and $S$ is said to be \emph{independent by $H$-paths}, if for every pair of different vertices $\{ u, v \} \subseteq S$, there is no $uv$-$H$-path between them.
  A \emph{kernel by $H-$paths}, or simply $H$-kernel, is a subset of vertices of $D$ that is both absorbent by $H$-paths and independent by $H$-paths.
Several interesting kinds of kernels are particular cases of $H$-kernels, as example, kernels by paths, kernels by monochromatic paths, kernels by alternating paths, kernels by rainbow paths, and usual kernels. Several conditions on the existence of $H$-kernels have been showed, as example, see \cite{12} and \cite{18}.   
  
If $W=(x_{0}, \ldots , x_{n})$ is a walk in an $H$-colored digraph $D$, and $i \in \{0, \ldots , n-1 \}$, we say that there is an \emph{obstruction on $x_{i}$} iff $(\rho (x_{i-1}, x_{i}), \rho (x_{i}, x_{i+1})) \notin A(H)$ (indices are taken modulo $n$ if $x_{0}=x_{n}$). 
We denote by $O_{H}(W)$ the set $\{ i \in \{ 0, \ldots , n-1\} : \text{there is an obstruction on }x_{i} \}$. 
The \emph{$H$-length of $W$}, denoted by $l_{H}(W)$, is defined as  $l_{H}(W)= |O_{H}(W)|+1$ if $W$ is open, or  $l_{H}(W)= |O_{H}(W)|$ otherwise. 
The $H$-length was primarily studied by Galeana-Sánchez and Sánchez-L{\'o}pez in \cite{12} for closed walks, and by Andenmatten, Galeana-Sáchez and Pach in \cite{20} for open paths.  Clearly, the usual length $l(W)$ coincides with the $H$-length $l_{H}(W)$, in the very particular case when $A(H)=\emptyset$. An open walk in an $H$-colored digraph is an $H$-walk if and only if it has $H$-length 1.
A particular kind of $H$-coloring in graphs was studied by Szeider in \cite{21} and, as a consequence, in \cite{20} was proved that under the assumption P $\neq$ NP, finding $uv$-paths of minimum $H$-length in $H$-colored graphs ($H$-colored digraphs) has no polynomial solution (although there is a polynomial algorithm to find $uv$-paths of minimum $H$-length for some $H$, see \cite{20}).  

Let $D$ be an $H$-colored digraph and $S$ a subset of vertices of $D$. 
If $l \geq 1$, we say that $S$ is an \emph{$(l, H)$-absorbent set}, if for every $v \in V(D) \setminus S$ there exists a $vS$-path whose $H$-length is at most $l$.
 If $k \geq 2$, we say that $S$ is a \emph{$(k,H)$-independent set}, if for every pair of different vertices in $S$, every path between them has $H$-length at least $k$.
  If $k\geq 2$ and $l\geq l$, we say that $S$ is a \emph{$(k, l,H)$-kernel} if it is both $(k,H)$-independent and $(l,H)$-absorbent. 
If $l=k-1$, a $(k,l,H)$-kernel is called \emph{$(k, H)$-kernel}. Such concepts were introduced by Galeana-Sánchez and Tecpa-Galván in \cite{richardHkl}.
It is straightforward to see that every $H$-kernel is a $(2,H)$-kernel, and every $(k,l)$-kernel is a $(k,l,H)$-kernel if $H$ has no arcs nor loops. Since finding $(k,l)$-kernels in digraphs is a NP-complete problem, finding $(k,l,H)$-kernels in $H$-colored digraphs is also a NP-complete problem.

In this paper, we study the existence of $(k,H)$-kernels in $H$-colored nearly tournaments. For instance, we will prove that if $D$ is an $H$-colored digraph, $D$ has a $(k,H)$-kernel provided that:

\begin{enumerate}[(i)]
\item $k \geq 3$ and $D$ is a tournament.

\item $k \geq r \geq 2$ and $D$ is an $r$-transitive digraph.

\item $k \geq 4$ and $D$ is a quasi-transitive digraph.

\item $k \geq 5$ and $D$ is a $3$-quasi-transitive digraph.

\item $k \geq r \geq 2$, $D$ is an $r$-quasi-transitive digraph, and every cycle with length $r+1$ is an $H$-cycle.

\item $k \geq 5$ and $D$ is an $r$-partite tournament ($r \geq 2$)

\item $D$ is a local in-tournament (out-tournament) and every cycle has $H$-length at most $k-2$.
\end{enumerate}

\section{First results}

For terminology and notation not defined here, we refer the reader to  \cite{1}. Two vertices in a digraph are \textit{adjacent} if there exist an arc between them.  An arc $(u,v)$ in a digraph $D$ is \emph{symmetric} if $(v,u) \in A(D)$, otherwise, such arc is called \emph{asymmetric}. In this paper we write walk, path and cycle, instead of directed walk, directed path, and directed cycle, respectively.
 If $W=(x_{0}, \ldots , x_{n})$ is a walk (path), we say that $W$ is an $x_0x_n$-\emph{walk} ($x_0x_n$-\emph{path}). If $n \geq 2$ and $i \in \{ 1, \ldots , n-1\}$, we say that $x_{i}$ is an \emph{internal vertex of $W$}.  The \emph{length} of $W$ is the number $n$ and it is denoted by $l(W)$. A cycle with length $k$ is called \emph{$k$-cycle.}
 
If $T_{1}=(z_{0}, \ldots , z_{n})$ and $T_{2}=(w_{0}, \ldots , w_{m})$ are walks in a digraph $D$, and $z_{n}=w_{0}$, we denote by $T_{1} \cup T_{2}$ the walk $(z_{0}, \ldots , z_{n} = w_{0}, \ldots , w_{m})$.
 Let $\{ v_{0}, \ldots , v_{n} \}$ be a subset of vertices of $D$, and for every $i \in \{0, \ldots , n-1\}$, $T_{i}$ a $v_{i}v_{i+1}$-walk, we denote by $\cup_{i=0}^{n-1} T_{i}$ the concatenation of such walks.
  Given $W=(x_{0}, \ldots x_{n})$  a walk in $D$, and $\{i, j \} \subseteq \{0, \ldots , n-1 \}$ with $i < j$, we denote by $(x_{i},W, x_{j})$ the walk $(x_{i}, x_{i+1}, \ldots , x_{j})$.
If $S_1$ and $S_2$ are two disjoint subsets of  $V(D)$, a $uv$-walk  in  $D$ is called an $S_1S_2$-\emph{walk} whenever $u$ $\in$ $S_1$ and $v$ $\in$ $S_2$. If $S_1 = \{x\}$ or $S_2 = \{x\}$, then we write $xS_2$-\emph{walk} or $S_1x$-\emph{walk}, respectively. If there exists at least one $uv$-path in $D$, a $uv$-path with minimum length is called \emph{$uv$-geodesic}, and its length is denoted by $d_{D}(u,v)$. 

Let $D$ be an $H$-colored digraph and $k \geq 2$. We define the \textit{$(k-1,H)$-closure of $D$}, denoted by $C_{H}^{k-1}(D)$, as the digraph such that $V(C_{H}^{k-1}(D)) = V(D)$ and $(u,v) \in A(C_{H}^{k-1} (D))$ iff there exists a $uv-$path in $D$ with $H$-length at most $k-1$. The following lemmas will be useful in what follows.

\begin{lemma}
\label{walks}
Let $D$ be an $H$-colored digraph and $k \geq 2$. The following assertions holds:
	\begin{enumerate}[a)]
	\item If $W$ is an open walk in $D$, then $l_{H}(W) \leq l(W)$.
	
	\item If $\{u, v \} \subseteq V(D)$, and there exists a $uv-$walk with length at most $k-1$ in $D$, then $(u,v) \in A(C_{H}^{k-1} (D) )$. 
	
	\item If $\{u, v \} \subseteq V(D)$, and there exists a closed walk in $D$ with length at most $k$, say $W$, such that $\{u, v \} \subseteq V(W)$, then $(u,v)$ is a symmetric arc in $C_{H}^{k-1}(D)$.
	\end{enumerate}
\end{lemma}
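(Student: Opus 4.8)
The plan is to prove the three parts in order, using (a) as the engine for (b) and (b) for (c). For part (a), I would exploit the convention built into the definition of obstruction: on an open walk $W=(x_0,\dots,x_n)$ the indices are \emph{not} read modulo $n$, so the starting vertex $x_0$ has no in-arc inside $W$ and therefore cannot carry an obstruction. Consequently $O_H(W)\subseteq\{1,\dots,n-1\}$, whence $|O_H(W)|\le n-1=l(W)-1$. Since $W$ is open, $l_H(W)=|O_H(W)|+1\le l(W)$, as desired.

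For part (b), the natural idea is to pass from a walk to a path. Given a $uv$-walk $W$ with $l(W)\le k-1$ (note that $u\ne v$ forces $W$ to be open), I would extract a $uv$-path $P$ contained in $W$ by deleting closed subwalks; this operation never increases the length, so $l(P)\le l(W)\le k-1$. Applying part (a) to the open path $P$ yields $l_H(P)\le l(P)\le k-1$, and by the definition of the closure this means $(u,v)\in A(C_H^{k-1}(D))$. The step I would be most careful about here is resisting the temptation to compare $l_H(P)$ with $l_H(W)$ directly: splicing out a cycle can create a fresh obstruction at the reattachment point, so that inequality is not available; routing the argument through the ordinary length via (a) sidesteps this entirely.

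For part (c), I would split the closed walk. Writing $W$ as a closed walk of length at most $k$ through both $u$ and $v$, I cut it at these two vertices into a $uv$-subwalk $W_1$ and a $vu$-subwalk $W_2$ with $l(W_1)+l(W_2)=l(W)\le k$. Since $u\ne v$, each piece has length at least $1$, so each has length at most $k-1$. Applying part (b) to $W_1$ and to $W_2$ gives $(u,v)\in A(C_H^{k-1}(D))$ and $(v,u)\in A(C_H^{k-1}(D))$ respectively, that is, $(u,v)$ is symmetric in $C_H^{k-1}(D)$. The only genuine content lies in part (a); parts (b) and (c) are essentially bookkeeping on lengths once (a) is in hand.
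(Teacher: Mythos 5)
Your proof is correct and follows essentially the same route as the paper's: (a) directly from the definition of $H$-length (no obstruction can occur at $x_0$ on an open walk, so $|O_H(W)|\le l(W)-1$), (b) by reducing a short walk to the closure definition via (a), and (c) by cutting the closed walk at $u$ and $v$ into two subwalks of length at most $k-1$ and applying (b) to each. You merely make explicit two details the paper leaves implicit---the extraction of a $uv$-path from the $uv$-walk in (b), and the correct caution that one must compare ordinary lengths rather than $H$-lengths there, since splicing out a closed subwalk can create a new obstruction at the reattachment point.
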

\begin{proof}
It follows from the definition of $H$-length that (a) holds. On the other hand, from Lemma \ref{walks} (a), and  the definition of $(k-1,H)$-closure, we have that (b) holds. In order to show that (c) holds, notice that two different vertices in $W$ are joined by a subpath of $W$ with length at most $k-1$. By applying Lemma \ref{walks} (b), we can conclude (c).
\end{proof}

\begin{lemma}
\label{subpaths}
Let $D$ be an $H$-colored digraph, and $W=(x_{0}, \ldots , x_{n})$ a cycle in $D$. If $\{ i, j \} \subseteq \{ 0, \ldots , n \}$ with $i < j$, and $W'=(x_{i}, W , x_{j})$ then $O_{H}(W') \subseteq O_{H}(W)$. 
\end{lemma}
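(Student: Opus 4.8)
The plan is to prove the containment pointwise: I will take an arbitrary $m \in O_{H}(W')$ and show that $m \in O_{H}(W)$, by arguing that the obstruction test performed at $x_{m}$ is literally the same whether one reads it along the subpath $W'$ or along the cycle $W$. The whole argument is bookkeeping about indices, so the bulk of the work is setting up the conventions precisely.

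First I would record where obstructions of $W'$ can sit. Writing $W' = (x_{i}, x_{i+1}, \ldots , x_{j})$, an obstruction on $x_{m}$ requires both a predecessor $x_{m-1}$ and a successor $x_{m+1}$ lying inside the walk, so it can only occur at an \emph{internal} vertex of $W'$; hence any $m \in O_{H}(W')$ satisfies $i < m < j$. Since $i \geq 0$ we get $m \geq 1$, and since $j \leq n$ we get $m \leq n-1$, so $m \in \{0, \ldots , n-1\}$ is a legitimate index for $O_{H}(W)$ as well.

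Next I would compare the two obstruction conditions at such an $m$. Along $W'$ the predecessor and successor of $x_{m}$ are $x_{m-1}$ and $x_{m+1}$; along the cycle $W$, because $m \neq 0$ no wrap-around modulo $n$ is triggered, so the predecessor and successor of $x_{m}$ are again $x_{m-1}$ and $x_{m+1}$. Therefore the pair $(\rho(x_{m-1},x_{m}), \rho(x_{m},x_{m+1}))$ is exactly the same object in both readings, and the condition $(\rho(x_{m-1},x_{m}), \rho(x_{m},x_{m+1})) \notin A(H)$ holds for $W$ precisely when it holds for $W'$. As $m \in O_{H}(W')$ forces this pair out of $A(H)$, we conclude $m \in O_{H}(W)$, and since $m$ was arbitrary, $O_{H}(W') \subseteq O_{H}(W)$.

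The step I expect to be the only genuinely delicate one is the handling of the cyclic index $m = 0$: the single place where being a cycle matters is that $W$ tests an obstruction at the ``seam'' $x_{0}$ (using the wrap-around predecessor $x_{n-1}$), whereas the open subpath $W'$ never tests this vertex. This is exactly why one can only claim the inclusion $\subseteq$ and not equality. I would finish by checking the boundary cases $i = 0$ and $j = n$ (where $x_{j} = x_{0}$); in each of these the predecessor/successor identification above still holds verbatim for every interior index $m$, so no separate treatment is needed.
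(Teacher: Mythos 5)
Your proposal is correct and follows essentially the same route as the paper's proof: both argue pointwise that any obstruction index $m \in O_{H}(W')$ satisfies $i < m < j$, hence lies in $\{0,\ldots,n-1\}$ with no wrap-around modulo $n$ triggered, so the pair $(\rho(x_{m-1},x_{m}),\rho(x_{m},x_{m+1})) \notin A(H)$ witnesses $m \in O_{H}(W)$ verbatim. Your added remarks on the seam at $x_{0}$ and the boundary cases $i=0$, $j=n$ are sound commentary beyond what the paper records, but do not change the argument.
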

\begin{proof}
Clearly, Lemma \ref{subpaths} holds when $O_{H}(W') = \emptyset$. Now, suppose that $O_{H}(W') \neq \emptyset$ and let $t \in O_{H}(W')$. It follows from the definition of $O_{H}(W')$ that $t \in \{i+1 \ldots , j-1 \}$ and $(\rho (x_{t-1},  x_{t}) , \rho (x_{t}, x_{t+1})) \notin A(H)$. Hence, $t \in \{0, \ldots , n-1 \}$ and $(\rho (x_{t-1},  x_{t}) , \rho (x_{t}, x_{t+1})) \notin A(H)$, which implies that $t \in O_{H}(W)$. Therefore, $O_{H}(W') \subseteq O_{H}(W)$. 
\end{proof}

 Given $k \geq 2$, a natural relation between $(k,H)$-kernels in $H$-colored digraphs and kernels in the $(k-1,H)$-closure is showed in the following lemma.

\begin{lemma}
\label{closure}
Let $D$ be an $H$-colored digraph and  $k \geq 2$. $D$ has a $(k,H)$-kernel if and only if $C_{H}^{k-1}(D)$ has a kernel.
\end{lemma}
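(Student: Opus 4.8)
The plan is to prove the slightly stronger statement that a fixed subset $S \subseteq V(D)$ is a $(k,H)$-kernel of $D$ if and only if it is a kernel of $C_{H}^{k-1}(D)$; the lemma then follows at once by quantifying over the existence of such an $S$. Since $V(C_{H}^{k-1}(D)) = V(D)$, the same subset $S$ is an eligible candidate in both digraphs, so it suffices to match the two defining conditions (absorbency and independence) across the two settings. I would do this by unpacking the definition of the arc set of the closure.

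First I would translate absorbency. The set $S$ is $(k-1,H)$-absorbent in $D$ precisely when, for every $x \in V(D) \setminus S$, there is an $xS$-path of $H$-length at most $k-1$, that is, some $s \in S$ admits an $xs$-path of $H$-length at most $k-1$. By the very definition of $A(C_{H}^{k-1}(D))$, this is equivalent to $x$ having an out-neighbor $s \in S$ in $C_{H}^{k-1}(D)$. Hence $S$ is $(k-1,H)$-absorbent in $D$ if and only if $S$ is absorbent in $C_{H}^{k-1}(D)$.

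Next I would translate independence. The set $S$ is $(k,H)$-independent in $D$ exactly when, for every pair of distinct $u,v \in S$, every path between them (in either direction) has $H$-length at least $k$; equivalently, there is neither a $uv$-path nor a $vu$-path of $H$-length at most $k-1$. Again by the definition of the arc set of the closure, this says precisely that neither $(u,v)$ nor $(v,u)$ is an arc of $C_{H}^{k-1}(D)$, i.e. $S$ is an independent set in $C_{H}^{k-1}(D)$. Combining the two equivalences yields that $S$ is a $(k,H)$-kernel of $D$ if and only if $S$ is a kernel of $C_{H}^{k-1}(D)$, which is the desired conclusion.

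The argument is essentially a careful rewriting of definitions, so no genuine obstacle arises. The one point that deserves attention is that both the $(k,H)$-kernel conditions and the closure are phrased in terms of \emph{directed paths} (rather than walks), so the correspondence with arcs of $C_{H}^{k-1}(D)$ is exact; in particular, when rewriting the independence condition one must be careful to account for both directions $uv$ and $vu$, since an arc in either direction of the closure would violate independence there.
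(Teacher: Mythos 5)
Your proposal is correct and follows essentially the same route as the paper: both arguments simply unpack the definition of $A(C_{H}^{k-1}(D))$ to match $(k-1,H)$-absorbency with absorbency in the closure and $(k,H)$-independence with independence in the closure; the only cosmetic difference is that you state the set-level equivalence for a fixed $S$ once, while the paper writes out the two implications separately. Your remark about handling both directions $uv$ and $vu$ in the independence condition is exactly the right point of care, and it matches the paper's use of unordered pairs $\{u,v\} \subseteq S$.
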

\begin{proof}
First we will show that if $D$ has a $(k,H)$-kernel, say $S$, then $S$ is a kernel in $C^{k-1}_{H}(D)$.

In order to show that $S$ is an absorbent set in $C^{k-1}_{H}(D)$, consider $x \in V(C^{k-1}_{H}(D)) \setminus S$. Since $S$ is a $(k,H)$-kernel in $D$,  there exists $w \in S$, and an $xw$-path in $D$ with $H$-length at most $k-1$. It follows from the definition of $C^{k-1}_{H}(D)$ that $(x,w) \in A(C^{k-1}_{H}(D))$, which implies that $S$ is an absorbent set in $C^{k-1}_{H}(D)$.

Now, we will show that $S$ is an independent set in $C^{k-1}_{H}(D)$. Proceeding by contradiction, suppose that there exists $\{ u, v \} \subseteq S$ such that $(u,v) \in A(C^{k-1}_{H}(D))$. It follows from definition of $C^{k-1}_{H}(D)$ that there exists a $uv$-path with length at most $k-1$ in $D$, which is no possible since $S$ is a $(k,H)$-independent set in $D$. Hence, $S$ is an independent set in $C^{k-1}_{H}(D)$. Therefore, if  $D$ has a $(k,H)$-kernel, then $C_{H}^{k-1}(D)$ has a kernel.

Now, we will show that if $C^{k-1}_{H}(D)$ has a kernel, say $S'$, then $S'$ is a $(k,H)$-kernel in $D$.

In order to show that $S'$ is a $(k-1,H)$-absorbent set in $D$, consider $x \in V(D) \setminus S'$. Since $S'$ is a kernel in $C^{k-1}_{H}(D)$, there exists $w \in S'$ such that $(x,w) \in A(C^{k-1}_{H}(D))$. Hence, by the definition of $C^{k-1}_{H}(D)$, there exists an $xw$-path in $D$ with $H$-length at most $k-1$, which implies that $S'$ is a $(k-1,H)$-absorbent set in $D$.

Now, we will show that $S'$ is a $(k,H)$-independent set in $D$. Proceeding by contradiction, suppose that there exists $\{ u, v \} \subseteq S'$ and a $uv$-path in $D$, with $H$-length at most $k-1$. It follows from definition of $C^{k-1}_{H}(D)$ that $(u,v) \in A(C^{k-1}_{H}(D))$, which is no possible since $S'$ is an independent set in $C^{k-1}_{H}(D)$. Hence, $S'$ is a $(k,H)$-independent set in $C^{k-1}_{H}(D)$. Therefore, if  $C^{k-1}_{H}(D)$ has a kernel, then $D$ has a $(k,H)$-kernel.
\end{proof}

\section{$(k,H)$-kernels in nearly tournaments.}

In this section, we will show several conditions in $H$-colored nearly tournaments that guarantee the existence of $(k,H)$-kernels for certain values of $k$.

\subsection{Tournaments and semicomplete digraphs}

\begin{theorem}
\label{tour1}
Let $D$ be an $H$-colored semicomplete digraph. If $k \geq 3$, then $D$ has a $(k,H)$-kernel.
\end{theorem}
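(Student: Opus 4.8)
The plan is to reduce the statement, via Lemma \ref{closure}, to showing that the closure $C_{H}^{k-1}(D)$ has a kernel, and then to verify Duchet's hypothesis (Theorem \ref{duchet}) for this closure; that is, to prove that every cycle of $C_{H}^{k-1}(D)$ contains a symmetric arc. Write $C = C_{H}^{k-1}(D)$. First I would record two structural facts. Since $k \geq 3$ we have $k-1 \geq 2 \geq 1$, so every arc of $D$ is a path of $H$-length $1 \leq k-1$; by Lemma \ref{walks} (b) this gives $A(D) \subseteq A(C)$, and hence $C$ is again semicomplete. Moreover, by Lemma \ref{walks} (c), any two vertices lying on a common closed walk of $D$ of length at most $k$ are joined by a symmetric arc in $C$; because $k \geq 3$, this applies in particular to the three vertices of any $3$-cycle of $D$.

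Next I would take an arbitrary cycle $\gamma = (v_{0}, \ldots , v_{n-1}, v_{0})$ of $C$ and split into cases. If $n = 2$, then $\gamma$ is a digon and both of its arcs are symmetric, so assume $n \geq 3$. If some arc $(v_{i}, v_{i+1})$ of $\gamma$ is not an arc of $D$, then by semicompleteness of $D$ its reverse $(v_{i+1}, v_{i})$ lies in $A(D) \subseteq A(C)$, so $(v_{i}, v_{i+1})$ is symmetric in $C$ and we are done. Hence the remaining and essential case is that every arc of $\gamma$ belongs to $A(D)$, i.e.\ $\gamma$ is a cycle of $D$ itself.

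In that case the goal is to exhibit a $3$-cycle of $D$ that shares an arc with $\gamma$: applying Lemma \ref{walks} (c) to this $3$-cycle will then render a specific arc of $\gamma$ symmetric in $C$, which is exactly what Duchet needs. To produce it, I would let $j$ be the least index with $2 \leq j \leq n-1$ and $(v_{j}, v_{0}) \in A(D)$; such a $j$ exists since $(v_{n-1}, v_{0}) \in A(D)$. Minimality of $j$ together with semicompleteness forces $(v_{0}, v_{i}) \in A(D)$ for all $2 \leq i < j$, while $(v_{0}, v_{1}) \in A(D)$ is the first arc of $\gamma$; in either case $(v_{0}, v_{j-1}) \in A(D)$. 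Combining this with the arc $(v_{j-1}, v_{j}) \in A(D)$ of $\gamma$ and with $(v_{j}, v_{0}) \in A(D)$ yields the $3$-cycle $(v_{0}, v_{j-1}, v_{j}, v_{0})$ in $D$, which contains the arc $(v_{j-1}, v_{j})$ of $\gamma$. By Lemma \ref{walks} (c) this arc is symmetric in $C$, completing the verification that every cycle of $C$ has a symmetric arc. Theorem \ref{duchet} then produces a kernel of $C$, and Lemma \ref{closure} converts it back into a $(k,H)$-kernel of $D$.

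I expect the main obstacle to be precisely this last construction: it is not enough to know (from Lemma \ref{walks} (c)) that $C$ has abundantly many symmetric arcs, because Duchet's theorem requires the symmetric arc to lie \emph{on} the given cycle $\gamma$. The careful choice of the least index $j$ is what guarantees that the auxiliary $3$-cycle reuses a genuine arc of $\gamma$ rather than a chord, and this is where semicompleteness of $D$ and the bound $k \geq 3$ (which ensures $3 \leq k$, so that the $3$-cycle is a closed walk short enough for Lemma \ref{walks} (c)) are used together.
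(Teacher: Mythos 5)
Your proof is correct, but it takes a genuinely different route from the paper. The paper's argument is much shorter: it takes a quasi-kernel $S$ of $D$ (every digraph has one, by Chv\'atal--Lov\'asz), observes that semicompleteness forces $S=\{x\}$ to be a single vertex, notes that a singleton is trivially $(k,H)$-independent, and then uses the quasi-kernel property $d_{D}(w,x)\leq 2$ together with Lemma \ref{walks} (a) to get $l_{H}\leq 2\leq k-1$ for a suitable $wx$-geodesic, so $\{x\}$ is $(k-1,H)$-absorbent. That argument avoids the closure and Duchet's theorem entirely, and it yields the strictly stronger conclusion that the $(k,H)$-kernel can be taken to be a single vertex. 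Your route instead verifies Duchet's hypothesis for $C_{H}^{k-1}(D)$ and reduces via Lemma \ref{closure}; your key steps all check out --- the case split on whether the cycle $\gamma$ survives into $D$, and in the essential case the minimal-index choice of $j$ producing the $3$-cycle $(v_{0},v_{j-1},v_{j},v_{0})$ that shares the arc $(v_{j-1},v_{j})$ with $\gamma$, so that Lemma \ref{walks} (c) (applicable since $3\leq k$) makes an arc \emph{on} $\gamma$ symmetric, exactly as Duchet requires. What your approach buys is methodological uniformity: it is precisely the closure-plus-Duchet template the paper deploys for the harder classes (Theorems \ref{panqker}, \ref{pan3qtr}, \ref{panpart} and \ref{localin}), and your insistence that the symmetric arc lie on the given cycle, not merely somewhere in the closure, is the right point of care. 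What it costs is length and the sharper structural information (singleton kernel) that the quasi-kernel argument gives for free.
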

\begin{proof}
Let $S$ be a quasi-kernel of $D$. Since $D$ is semicomplete, then $S=\{ x \}$ for some $x \in V(D)$. Clearly, $S$ is a $(k,H)$-independent set in $D$. On the other hand, since every vertex $w \in V(D) \setminus S$ holds that $d_{D}(w,x) \leq 2$, it follows from Lemma \ref{walks} (a) that  $S$ is a $(k-1,H)$-absorbent set in $D$. Therefore, $S$ is a $(k,H)$-kernel in $D$.
\end{proof}

\begin{theorem}
\label{tour2}
Let $D$ be an $H$-colored semicomplete digraph in which every $3$-cycle is an $H$-cycle. If $k \geq 2$, then $D$ has a  $(k,H)$-kernel.
\end{theorem}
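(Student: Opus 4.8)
The plan is to apply Lemma~\ref{closure} and reduce the statement to showing that the closure $G := C_H^{k-1}(D)$ has a kernel, and then to verify the hypothesis of Duchet's theorem (Theorem~\ref{duchet}) for $G$. First I would dispose of the range $k \geq 3$: there the conclusion is already Theorem~\ref{tour1}, which needs no hypothesis on $3$-cycles. Hence the entire content of the statement lies in the case $k = 2$, and from now on I assume $k = 2$, so that $(u,v) \in A(G)$ precisely when $D$ contains a $uv$-$H$-path. Note that every single arc of $D$ is an $H$-path (its $H$-length is $1 \leq k-1$), so $D \subseteq G$ and in particular $G$ is semicomplete.

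The key local observation is that every $3$-cycle of $D$ becomes a symmetric complete triangle in $G$. Indeed, if $T = (a,b,c,a)$ is a $3$-cycle, then by hypothesis $T$ is an $H$-cycle, so each of its length-$2$ subpaths $(a,b,c)$, $(b,c,a)$, $(c,a,b)$ is an $H$-path; together with the three arcs of $T$ this puts all six ordered pairs on $\{a,b,c\}$ into $G$. In particular, the reverse of every arc of $T$ lies in $G$. Now, since $G$ is semicomplete, by Theorem~\ref{duchet} it suffices to prove that every cycle of $G$ has a symmetric arc. Suppose, for contradiction, that $C = (y_0, \ldots, y_{m-1}, y_0)$ is a cycle of $G$ in which every arc is asymmetric. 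For each arc $(y_i, y_{i+1})$ there is a $y_iy_{i+1}$-$H$-path in $D$; were $(y_{i+1}, y_i) \in A(D)$, that single arc would be a $y_{i+1}y_i$-$H$-path, making $(y_i,y_{i+1})$ symmetric in $G$. Hence $(y_{i+1},y_i) \notin A(D)$, and since $D$ is semicomplete $(y_i, y_{i+1}) \in A(D)$. Thus $C$ is in fact a cycle of $D$, none of whose arcs is symmetric in $D$.

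The statement then reduces to the following purely combinatorial lemma, which I expect to be the main obstacle: \emph{in a semicomplete digraph, every cycle with no symmetric arc shares an arc with some $3$-cycle.} Granting it, the arc $(u,v)$ of $C$ lying on a $3$-cycle $T$ of $D$ has its reverse $(v,u)$ in $G$ by the local observation above, contradicting that $(u,v)$ is asymmetric in $G$. Hence every cycle of $G$ has a symmetric arc, $G$ has a kernel, and $D$ has a $(2,H)$-kernel.

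To prove the lemma I would induct on the length $m$ of $C = (y_0, \ldots, y_{m-1}, y_0)$. For $m = 3$ the cycle is itself a $3$-cycle. For $m \geq 4$ I would examine the chord between $y_0$ and $y_2$: if $(y_2, y_0) \in A(D)$, then $(y_0,y_1,y_2,y_0)$ is a $3$-cycle meeting $C$ in the arc $(y_0,y_1)$; otherwise $(y_0,y_2) \in A(D)$ with $(y_2,y_0)\notin A(D)$, and the shorter cycle $C' = (y_0, y_2, y_3, \ldots, y_{m-1}, y_0)$ again has no symmetric arc (its chord $(y_0,y_2)$ is asymmetric, the rest are arcs of $C$), so the induction hypothesis applies. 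The delicate point, and the part needing the most care, is the case where induction returns the chord $(y_0,y_2)$ itself as lying on a $3$-cycle $(y_0, y_2, w, y_0)$; here I would recover an arc of the original cycle by comparing $w$ with $y_1$. Semicompleteness makes $w$ and $y_1$ adjacent, and the no-symmetric-arc hypothesis forces $w \neq y_1$ (otherwise $(y_1,y_2)$ would be symmetric); then either $w \to y_1$, giving the $3$-cycle $(y_1,y_2,w,y_1)$ through the arc $(y_1,y_2)$, or $y_1 \to w$, giving $(y_0,y_1,w,y_0)$ through the arc $(y_0,y_1)$. Ensuring that this bookkeeping always outputs an arc of the starting cycle is the crux of the argument.
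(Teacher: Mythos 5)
Your proposal is correct, but it takes a genuinely different route from the paper. The paper's proof is short and constructive: it takes a quasi-kernel of $D$, which in a semicomplete digraph is a single vertex $x$; then every $w \neq x$ satisfies $d_{D}(w,x) \leq 2$, and any $wx$-geodesic of length $2$ closes into a $3$-cycle by semicompleteness, which is an $H$-cycle by hypothesis, so the geodesic is a $wx$-$H$-path. Thus $\{x\}$ is itself a kernel by $H$-paths (a $(2,H)$-kernel), and the case $k \geq 3$ is delegated to Theorem~\ref{tour1} exactly as you do. You instead pass to the closure $C_{H}^{1}(D)$ and verify Duchet's hypothesis, which forces you to prove an auxiliary structural lemma (every cycle without symmetric arcs in a semicomplete digraph carries an arc lying on a $3$-cycle); I checked your inductive proof, including the delicate fix-up case where the induction returns the chord $(y_0,y_2)$, and it is sound (the comparison of $w$ with $y_1$ does always output a $3$-cycle through $(y_0,y_1)$ or $(y_1,y_2)$). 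Your approach is heavier but has the virtue of matching the paper's own template for the quasi-transitive, $3$-quasi-transitive and multipartite cases, and it actually shows more: every cycle of $C_{H}^{1}(D)$ has a symmetric arc, so the closure is kernel-perfect, whereas the paper's argument buys an explicit one-vertex kernel. One simplification worth noting: your lemma admits a direct extremal-index proof in the style of the paper's Theorem~\ref{panqker} — take the smallest $j \geq 2$ with $(y_j, y_0) \in A(D)$ (it exists since $(y_{m-1},y_0) \in A(D)$); if $j=2$ then $(y_0,y_1,y_2,y_0)$ is the desired $3$-cycle, and if $j \geq 3$ then minimality and semicompleteness give $(y_0, y_{j-1}) \in A(D)$, so $(y_0, y_{j-1}, y_j, y_0)$ is a $3$-cycle through the arc $(y_{j-1},y_j)$ of $C$ — which eliminates the induction and the bookkeeping you rightly identified as the crux.
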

\begin{proof}
It follows from Theorem \ref{tour1} that only remains to show that $D$ has a kernel by $H$-paths. Consider a quasi-kernel of $D$, say $S$.  Since $D$ is semicomplete, then $S=\{ x \}$ for some $x \in V(D)$. 
Clearly, $S$ is an independent set by $H$-paths. 
In order to show that $S$ is an absorbent set by $H$-paths, consider $w \in V(D) \setminus S$.
By the choice of $S$, we have that either $d_{D}(w,x)=1$ or $d_{D}(w,x)=2$.  If $d_{D}(w,x)=1$, then $(w,x)$ is a $wx$-$H$-path. If $d_{D}(w,x)=2$, consider a $wx$-geodesic in $D$, say $(w,z,x)$. Since $D$ is semicomplete, then $w$ and $x$ are joined by an arc in $D$ and, since $d_{D}(w,x)=2$, it follows that $(x,w) \in A(D)$. Hence, $C=(w,z,x,w)$ is a $3$-cycle in $D$ and, by hypothesis, $C$ is an $H$-cycle. We can conclude that $(w,z,x)$ is a $wx$-$H$-path. Therefore, $S$ is an absorbent set by $H$-paths in $D$.
\end{proof}

From Theorem \ref{tour1} and Theorem \ref{tour2}, it is straightforward to see the following corollaries. 

\begin{cor}
If $D$ is an $H$-colored tournament, then for every $k \geq 3$, $D$ has a $(k,H)$-kernel.
\end{cor}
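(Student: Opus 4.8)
The plan is to observe that the corollary is an immediate specialization of Theorem~\ref{tour1}, and to make explicit the single fact that connects them. Recall from the introduction that a tournament is defined to be a semicomplete digraph having no symmetric arcs; in particular, every tournament is a semicomplete digraph. Hence, given an $H$-colored tournament $D$, the hypotheses of Theorem~\ref{tour1} are satisfied automatically. For every $k \geq 3$, that theorem then produces a $(k,H)$-kernel in $D$, which is exactly the assertion to be proved. The restriction $k \geq 3$ is inherited unchanged from Theorem~\ref{tour1}, so no adjustment to the range of $k$ is needed.

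I do not anticipate any genuine obstacle, since the containment of the class of tournaments inside the class of semicomplete digraphs is definitional rather than something requiring argument. Concretely, I would write a one-line proof stating that the result follows directly from Theorem~\ref{tour1}, together with the remark that a tournament is a semicomplete digraph. It is worth noting that Theorem~\ref{tour2} plays no role in this particular corollary; it would instead be the tool needed for a companion statement handling the case $k = 2$ under the additional hypothesis that every $3$-cycle is an $H$-cycle, which is why the surrounding text cites both theorems as the source of the corollaries it introduces.
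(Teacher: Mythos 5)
Your proof is correct and matches the paper's intended argument exactly: the paper derives this corollary directly from Theorem~\ref{tour1} using the definitional fact that a tournament is a semicomplete digraph, which is precisely your one-line specialization. Your remark that Theorem~\ref{tour2} instead serves the companion corollary (allowing $k \geq 2$ when every $3$-cycle is an $H$-cycle) is also accurate.
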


\begin{cor}
If $D$ is an $H$-colored tournament, and every $3$-cycle in $D$ is an $H$-cycle, then for every $k \geq 2$, $D$ has a $(k,H)$-kernel.
\end{cor}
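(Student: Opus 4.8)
The plan is to obtain this corollary as an immediate specialization of Theorem \ref{tour2}, exploiting the class inclusion recorded in the introduction. Recall that a tournament is by definition a semicomplete digraph (one that happens to have no symmetric arcs); consequently every $H$-colored tournament is in particular an $H$-colored semicomplete digraph. So the first thing I would do is verify that the hypotheses of the corollary imply those of Theorem \ref{tour2}.

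More precisely, suppose $D$ is an $H$-colored tournament in which every $3$-cycle is an $H$-cycle, and let $k \geq 2$. By the observation above, $D$ is an $H$-colored semicomplete digraph, and the coloring condition ``every $3$-cycle is an $H$-cycle'' is literally the same hypothesis appearing in Theorem \ref{tour2}. Hence $D$ satisfies all the assumptions of that theorem for the given $k$.

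I would then simply invoke Theorem \ref{tour2} to conclude that $D$ has a $(k,H)$-kernel, which is exactly the desired statement. (For completeness one notes that the companion corollary follows in the same way from Theorem \ref{tour1}, dropping the $3$-cycle hypothesis at the cost of requiring $k \geq 3$.)

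There is no genuine obstacle here: the entire content is the definitional containment of tournaments within semicomplete digraphs, after which the result is inherited verbatim. The only point worth a sentence of care is confirming that no additional property of tournaments (such as the absence of symmetric arcs) is needed in the argument of Theorem \ref{tour2}, so that restricting to the tournament subclass neither strengthens nor weakens the applicable hypotheses.
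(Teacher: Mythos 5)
Your proof is correct and matches the paper's own reasoning: the paper derives this corollary directly from Theorem \ref{tour2}, noting (as you do) that every tournament is a semicomplete digraph, so the result is an immediate specialization. Your extra remark that the argument of Theorem \ref{tour2} uses no additional property of tournaments is a fine sanity check but not needed.
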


\subsection{$r$-transitive digraphs}

The following lemma will be useful in what follows.

\begin{lemma}\cite{3qt}
\label{ktrans}
Let $D$ be an $r$-transitive digraph ($r \geq 2$), and $\{u, v \} \subseteq V(D)$. If there exists a $uv$-path in $D$, then $d_{D}(u,v) \leq r-1$.
\end{lemma}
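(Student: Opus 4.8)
The plan is to argue by contradiction using a shortest $uv$-path. Assume there exists a $uv$-path; then among all such paths choose a $uv$-geodesic $P=(u=x_{0}, x_{1}, \ldots , x_{m}=v)$, so that $m=d_{D}(u,v)$. The goal is to show $m \leq r-1$, so suppose instead that $m \geq r$. Since $P$ is a path, all its vertices are distinct, and in particular the initial segment $(x_{0}, x_{1}, \ldots , x_{r})$ is a $uv'$-path of length exactly $r$, where $v'=x_{r}$.

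The key step is to invoke $r$-transitivity directly. Because $(x_{0}, \ldots , x_{r})$ is a path of length $r$ from $x_{0}$ to $x_{r}$, the definition of an $r$-transitive digraph guarantees that $(x_{0}, x_{r}) \in A(D)$. I would then concatenate this single arc with the terminal segment $(x_{r}, x_{r+1}, \ldots , x_{m})$ of $P$ to obtain the walk $W'=(x_{0}, x_{r}, x_{r+1}, \ldots , x_{m})$. Its length is $1+(m-r)=m-r+1$, and since $r \geq 2$ we have $m-r+1 \leq m-1 < m$.

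It remains to check that $W'$ is genuinely a path and not merely a walk, so that it contradicts the minimality of $m$. This is immediate: the vertices appearing in $W'$, namely $x_{0}, x_{r}, x_{r+1}, \ldots , x_{m}$, form a subset of $\{x_{0}, \ldots , x_{m}\}$, and these are pairwise distinct because $P$ is a path. Hence $W'$ is a $uv$-path strictly shorter than the geodesic $P$, contradicting $m=d_{D}(u,v)$. Therefore the assumption $m \geq r$ is untenable, and $d_{D}(u,v) \leq r-1$.

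I do not expect any serious obstacle here; the argument is a short ``shortcut'' contradiction. The only points requiring mild care are confirming that the initial segment of length $r$ exists (which is exactly what $m \geq r$ provides) and that the resulting shortcut stays a path rather than only a walk, both of which follow from $P$ being a geodesic and hence having distinct vertices. The hypothesis $r \geq 2$ is what makes the length decrease strict, and it also cleanly covers the boundary case $m=r$, where the shortcut collapses to the single arc $(u,v)$.
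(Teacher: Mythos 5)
Your proof is correct: the shortcut argument (take a $uv$-geodesic, apply $r$-transitivity to its initial segment of length $r$, and contradict minimality, noting $r\geq 2$ makes the shortening strict and that distinctness of the geodesic's vertices keeps the shortcut a path) is exactly the standard proof of this fact. The paper itself gives no proof, citing the lemma from the reference \cite{3qt}, where essentially this same argument appears, so there is nothing to flag.
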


As a consequence of the previous lemma, we have the following results.

\begin{theorem}
Let $D$ be an $H$-colored $r$-transitive digraph ($r \geq 2$). For every $l \geq r-1$ and $k \geq 2$, $D$ has a $(k, l, H)$-kernel.
\end{theorem}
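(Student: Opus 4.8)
The plan is to produce a single set that works simultaneously for all the claimed values of $k$ and $l$, namely a kernel by paths of $D$. Recall from the introduction that every digraph has a kernel by paths (Berge); let $S$ be such a set, so that $S$ is independent by paths and absorbent by paths. I will check that $S$ is both $(k,H)$-independent and $(l,H)$-absorbent for every $k \geq 2$ and every $l \geq r-1$, which is precisely the assertion of the theorem.

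The independence is vacuous, and this is the point I would make first. Since $S$ is independent by paths, there is no path between any two distinct vertices of $S$ (in either direction). Hence the condition defining $(k,H)$-independence---that every path between two distinct vertices of $S$ have $H$-length at least $k$---is satisfied trivially, and it is satisfied for every $k \geq 2$ at once.

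For the absorbency I would combine the two lemmas already at hand. Let $x \in V(D) \setminus S$. Because $S$ is absorbent by paths, there is some $w \in S$ and an $xw$-path in $D$. Since $D$ is $r$-transitive, Lemma \ref{ktrans} yields $d_{D}(x,w) \leq r-1$; let $P$ be an $xw$-geodesic, so that $P$ is an $xS$-path with $l(P) \leq r-1$. Applying Lemma \ref{walks}(a) gives $l_{H}(P) \leq l(P) \leq r-1 \leq l$, so $S$ is $(l,H)$-absorbent for every $l \geq r-1$. Together with the previous paragraph, this shows that $S$ is a $(k,l,H)$-kernel.

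I do not anticipate a genuinely hard step: the argument rests entirely on recognizing that a kernel by paths already encodes the required independence vacuously, and that $r$-transitivity---through Lemma \ref{ktrans}---forces every absorbing path down to length at most $r-1$, short enough for Lemma \ref{walks}(a) to bound its $H$-length by $l$. If anything needs care, it is only the observation that a geodesic furnished by Lemma \ref{ktrans} is automatically a path ending in $S$, so no separate verification that it avoids $S$ internally is required; the fact that $x \notin S$ is all that matters.
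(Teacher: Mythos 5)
Your proposal is correct and is essentially identical to the paper's own proof: both take a kernel by paths, observe that path-independence makes $(k,H)$-independence vacuous for all $k \geq 2$, and bound the $H$-length of an absorbing geodesic by $r-1 \leq l$ via Lemma \ref{ktrans}. Your citation of Lemma \ref{walks}(a) for the bound $l_{H}(P) \leq l(P)$ is in fact the right one (the paper's text cites part (b) at that step, which is a slip).
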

\begin{proof}
Let $N$ be a kernel by paths in $D$. Since $N$ is a path-independent set, then for every $k \geq 2$, $N$ is a $(k,H)$-independent set. On the other hand, consider $x \in V(D) \setminus N$. Since $N$ is a kernel by paths in $D$, there exists $w \in N$ and an $xw$-path. If $W$ is an $xw$-geodesic, then $l(W) \leq r-1$ (Lemma \ref{ktrans}), which implies that $l_{H}(W) \leq r-1$ (Lemma \ref{walks} (b)). Hence, $l_{H}(W) \leq l$, concluding that $N$ is an $(l,H)$-absorbent set. Therefore, $N$ is a $(k,l,H)$-kernel. 
\end{proof}

\begin{cor}
\label{panrtrans}
Let $D$ be an $H$-colored $r$-transitive digraph. For every $k \geq r$, $D$ has a $(k,H)$-kernel.
\end{cor}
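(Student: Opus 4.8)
The plan is to deduce this directly from the theorem immediately preceding it by specializing the absorbency parameter $l$. Recall that, by definition, a $(k,H)$-kernel is precisely a $(k,l,H)$-kernel in the case $l=k-1$; hence it suffices to exhibit a $(k,k-1,H)$-kernel of $D$.

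First I would invoke the preceding theorem with the choice $l=k-1$. To apply it I must verify its two hypotheses, namely $k\geq 2$ and $l\geq r-1$. Since we are given $k\geq r$ and the definition of an $r$-transitive digraph requires $r\geq 2$, the inequality $k\geq 2$ holds. Moreover $l=k-1\geq r-1$ is equivalent to $k\geq r$, which is exactly our hypothesis. Thus both conditions are satisfied, and the theorem produces a $(k,k-1,H)$-kernel of $D$. As a $(k,k-1,H)$-kernel is by definition a $(k,H)$-kernel, the conclusion follows.

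I do not anticipate any genuine obstacle here: the corollary is a pure specialization of the theorem, and the only content is the bookkeeping that relates the parameters $k$, $l$, and $r$ through the equivalence $k\geq r \Leftrightarrow k-1\geq r-1$ together with the standing assumption $r\geq 2$. Accordingly, the write-up will be a single short paragraph that fixes $l=k-1$, checks these two inequalities, and cites the theorem.
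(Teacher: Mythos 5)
Your proposal is correct and is exactly the argument the paper intends: the corollary is stated as an immediate consequence of the preceding theorem, obtained by setting $l=k-1$ and noting that $k\geq r$ (with $r\geq 2$) gives both $l\geq r-1$ and $k\geq 2$. Your parameter bookkeeping matches the paper's implicit reasoning, so there is nothing to add.
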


\begin{cor}
\label{pantrans}
Let $D$ be an $H$-colored transitive digraph. If $k \geq 2$, then $D$ has a $(k,H)$-kernel.
\end{cor}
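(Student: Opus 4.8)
The plan is to recognize this corollary as the degenerate case $r = 2$ of the $r$-transitive theory just developed, and hence to prove nothing new. The introduction defines a transitive digraph to be precisely a $2$-transitive digraph, so the hypothesis ``$D$ is transitive'' is literally the hypothesis of Corollary \ref{panrtrans} specialized to $r = 2$. First I would record this definitional identification, noting that $D$ is $r$-transitive with $r = 2$. Then I would invoke Corollary \ref{panrtrans}, which guarantees a $(k,H)$-kernel in any $H$-colored $r$-transitive digraph for every $k \geq r$; setting $r = 2$ yields a $(k,H)$-kernel for every $k \geq 2$, which is exactly the assertion.

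If one preferred to bypass Corollary \ref{panrtrans} and appeal directly to the (unlabelled) theorem immediately preceding it, which produces a $(k,l,H)$-kernel whenever $l \geq r-1$ and $k \geq 2$, the only thing to verify is the harmless index bookkeeping. A $(k,H)$-kernel is by definition a $(k, k-1, H)$-kernel, so I would take $l = k-1$ and check that the constraint $l \geq r - 1 = 1$ holds; since the hypothesis $k \geq 2$ gives $k - 1 \geq 1$, the theorem applies and delivers the required kernel.

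The hard part, such as it is, is nonexistent: there is no combinatorial obstacle, and the whole proof reduces to the two definitional observations that transitivity coincides with $2$-transitivity and that a $(k,H)$-kernel is the $l = k-1$ instance of a $(k,l,H)$-kernel. Both are immediate from the definitions recalled in the introduction, so I expect the final argument to be a single sentence citing Corollary \ref{panrtrans} with $r = 2$.
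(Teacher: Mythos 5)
Your proposal is correct and matches the paper's intent exactly: the paper states Corollary \ref{pantrans} without separate proof, precisely because it is the $r=2$ instance of Corollary \ref{panrtrans} (equivalently, of the preceding theorem with $l = k-1 \geq 1 = r-1$), using the definitional fact that transitive means $2$-transitive. Your index bookkeeping for the alternative route via the $(k,l,H)$-kernel theorem is also accurate.
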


\subsection{Quasi-transitive digraphs.}

The following lemma provides a nice structure on quasi-transitive digraphs, and will be useful to guarantee the existence of $(k,H)$-kernels for certain values of $k$ in $H$-colored quasi-transitive digraphs.

\begin{lemma}\cite{1}
\label{quasibang}
Let $D$ be a quasi-transitive digraph and $\{u, v \} \subseteq V(D)$ such that there exists a $uv$-path in $D$. If $u$ and $v$ are not adjacent in $D$, then there exists $\{ x,z \} \subseteq V(D) \setminus \{u,v \}$ such that $(u,x)$, $(x,z)$, $(z,v)$, $(z,u)$ and $(v,x)$ are arcs of $D$.
\end{lemma}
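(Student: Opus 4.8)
The plan is to examine a shortest $uv$-path and prove it has length exactly $3$; its two internal vertices will then be the required $x$ and $z$. First I would note that, as $u$ and $v$ are non-adjacent, every $uv$-path has length at least $3$: a length-$1$ path is an arc, while a length-$2$ path $u\to w\to v$ forces $u$ and $v$ to be adjacent by quasi-transitivity. So fix a shortest $uv$-path $P=(p_0,\dots,p_d)$ with $p_0=u$, $p_d=v$ and $d\ge 3$.

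The workhorse is a family of back arcs. Applying quasi-transitivity to each consecutive triple $(p_i,p_{i+1},p_{i+2})$ gives an arc between $p_i$ and $p_{i+2}$, and minimality of $P$ excludes $(p_i,p_{i+2})$ (it would shortcut $P$); hence $(p_{i+2},p_i)\in A(D)$ for every $i\in\{0,\dots,d-2\}$.

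Next I would prove $d=3$ by contradiction, assuming $d\ge 4$. Starting from the back arc $v=p_d\to p_{d-2}$, I would push the head of this arc down the path two steps at a time: whenever $v\to p_j$ with $j\ge 2$, quasi-transitivity applied to $v\to p_j\to p_{j-2}$ (the second arc being a back arc) produces an arc between $v$ and $p_{j-2}$, and this arc cannot be $(p_{j-2},v)$, since together with $(p_0,\dots,p_{j-2})$ it would be a $uv$-path of length $j-1<d$; hence $v\to p_{j-2}$. Iterating, the head index drops by two each time, so depending on the parity of $d$ the process either reaches the triple $v\to p_2\to p_0$, whose quasi-transitivity step yields an arc between $u$ and $v$, or it stops at $v\to p_1$; in the latter case $v\to p_1\to p_2$ forces $v\to p_2$ and then $v\to p_2\to p_0$ again forces an arc between $u$ and $v$. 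Either outcome contradicts the non-adjacency of $u$ and $v$, so $d=3$.

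Finally, with $d=3$ the path is $u\to p_1\to p_2\to v$, and the two back arcs are $p_2\to u$ (case $i=0$) and $v=p_3\to p_1$ (case $i=1$). Setting $x=p_1$ and $z=p_2$, both internal to $P$ and hence in $V(D)\setminus\{u,v\}$ and distinct, the arcs $(u,x),(x,z),(z,v)$ are arcs of $P$ while $(z,u)$ and $(v,x)$ are precisely the two back arcs, which is exactly the required configuration. The main obstacle is the middle step, ruling out $d\ge 4$: it is the only place needing a genuine propagation argument together with care over the parity of $d$, whereas the back-arc derivation and the final verification follow immediately from quasi-transitivity and minimality.
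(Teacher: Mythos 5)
This lemma is quoted in the paper from \cite{1} without proof, so there is no internal argument to compare against; judged on its own, your proof is correct. Your route --- shortest $uv$-path, back arcs $(p_{i+2},p_i)$ forced by quasi-transitivity plus minimality, a parity-split propagation of arcs from $v$ down the path to rule out length $\geq 4$, and then reading off $x=p_1$, $z=p_2$ from the length-$3$ path --- is essentially the standard argument of Bang-Jensen and Huang for this lemma, with all the delicate points (distinctness of vertices in the length-$2$ paths, the shortcut bounds $j-1<d$, and the terminal triples $v\to p_2\to p_0$ and $v\to p_1$) handled correctly.
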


As a consequence of the previous lemma, we have the following result.

\begin{lemma}
\label{quasi}
Let $D$ be an $H$-colored quasi-transitive digraph, and $k \geq 4$. If there exists a $uv$-walk in $D$, then either $u$ and $v$ are adjacent in $D$ or $(v,u)$ is a symmetric arc in $C_{H}^{k-1}(D)$.
\end{lemma}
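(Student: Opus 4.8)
The plan is to reduce the statement to the purely structural Lemma \ref{quasibang} and then convert the arcs it produces into two short paths, one in each direction between $u$ and $v$, whose $H$-lengths are controlled by Lemma \ref{walks}.

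First I would note that a $uv$-walk always contains a $uv$-path, so the hypothesis guarantees the existence of a $uv$-path in $D$. If $u$ and $v$ are adjacent, the first alternative of the conclusion holds and there is nothing more to prove. So I would assume that $u$ and $v$ are not adjacent and aim for the second alternative, namely that $(v,u)$ is a symmetric arc in $C_H^{k-1}(D)$.

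With $u$ and $v$ non-adjacent and a $uv$-path at hand, Lemma \ref{quasibang} supplies two vertices $x,z \in V(D)\setminus\{u,v\}$ (distinct from one another and from $u,v$) together with the five arcs $(u,x)$, $(x,z)$, $(z,v)$, $(z,u)$, $(v,x)$. The key observation is that these arcs assemble into two paths of length $3$ running in opposite directions: the $uv$-path $(u,x,z,v)$, using $(u,x),(x,z),(z,v)$, and the $vu$-path $(v,x,z,u)$, using $(v,x),(x,z),(z,u)$. Since $u,x,z,v$ are four distinct vertices, both are genuine paths.

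Finally, since $k \geq 4$ we have $k-1 \geq 3$, so each of these paths has length at most $k-1$. By Lemma \ref{walks}(b) it follows that both $(u,v)$ and $(v,u)$ belong to $A(C_H^{k-1}(D))$, so $(v,u)$ is a symmetric arc in $C_H^{k-1}(D)$, as required. I expect no serious obstacle here; the only points to watch are that the threshold $k \geq 4$ is exactly what is needed to absorb paths of length $3$, and that one must select the correct pairing of the five arcs into the two paths above — in particular resisting the temptation to route a single closed walk through both $u$ and $v$ and invoke Lemma \ref{walks}(c), since the shortest such closed walk one obtains here has length $6$ and would force the weaker hypothesis $k \geq 6$.
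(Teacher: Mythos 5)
Your proposal is correct and follows essentially the same route as the paper: assume non-adjacency, invoke Lemma \ref{quasibang} to obtain the vertices $x,z$, assemble the two length-$3$ paths $(u,x,z,v)$ and $(v,x,z,u)$, and apply Lemma \ref{walks}(b) with $k-1 \geq 3$ to get the symmetric arc in $C_{H}^{k-1}(D)$. Your added observations (extracting a path from the walk, and noting why one should not route a closed walk through Lemma \ref{walks}(c)) are sound but do not change the argument.
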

\begin{proof}
If $(u,v)$ or $(v,u)$ is an arc of $D$, we are done. Now, suppose that $u$ and $v$ are not adjacent in $D$. It follows from Lemma \ref{quasibang} that there exists $\{ x,z \} \subseteq V(D) \setminus \{u,v \}$ such that $(u,x)$, $(x,z)$, $(z,v)$, $(z,u)$ and $(v,x)$ are all arcs of $D$. Hence, $(u,x,z,v)$ and $(v,x,z,u)$ are paths in $D$ with length 3. By Lemma \ref{walks} (b), we can conclude that $(u,v)$ is a symmetric arc in $C_{H}^{k-1}(D)$.  
\end{proof}

\begin{theorem}
\label{panqker}
Let $D$ be an $H$-colored quasi-transitive digraph. If $k \geq 4$, then $D$ has a $(k,H)$-kernel.
\end{theorem}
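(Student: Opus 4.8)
The plan is to reduce the problem to a kernel-existence question in the closure digraph and then apply a classical sufficient condition. By Lemma \ref{closure}, $D$ has a $(k,H)$-kernel if and only if $C_{H}^{k-1}(D)$ has a kernel, so it suffices to exhibit a kernel in $C_{H}^{k-1}(D)$. To do this I would invoke Duchet's theorem (Theorem \ref{duchet}): it is enough to prove that \emph{every cycle in $C_{H}^{k-1}(D)$ has a symmetric arc}. Thus the whole argument comes down to analyzing an arbitrary cycle $C=(u_{0}, \ldots , u_{n}=u_{0})$ of $C_{H}^{k-1}(D)$ and locating a symmetric arc lying on $C$.

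The first step is a reduction to the case in which $C$ is actually a cycle of $D$. Each arc $(u_{i},u_{i+1})$ of $C$ is witnessed by a $u_{i}u_{i+1}$-path in $D$ (of $H$-length at most $k-1$), in particular by a $u_{i}u_{i+1}$-walk in $D$; since $k \geq 4$, Lemma \ref{quasi} applies and gives that either $u_{i}$ and $u_{i+1}$ are adjacent in $D$, or $(u_{i+1},u_{i})$ is a symmetric arc of $C_{H}^{k-1}(D)$. In the latter case the arc $(u_{i},u_{i+1})$ of $C$ is already symmetric and we are done. So I may assume every consecutive pair $u_{i},u_{i+1}$ is adjacent in $D$. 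If for some $i$ one has $(u_{i+1},u_{i})\in A(D)$, then by Lemma \ref{walks} (b) both $(u_{i},u_{i+1})$ and $(u_{i+1},u_{i})$ belong to $C_{H}^{k-1}(D)$ (each is a length-$1$ path), so again the arc $(u_{i},u_{i+1})$ of $C$ is symmetric. The remaining case is that $(u_{i},u_{i+1})\in A(D)$ for every $i$, i.e. $C$ is a cycle of $D$ itself.

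For this core case I would use quasi-transitivity iteratively, with $u_{0}$ playing the role of a hub. The path $(u_{0},u_{1},u_{2})$ of length $2$ in $D$ forces $u_{0}$ and $u_{2}$ to be adjacent. If $(u_{2},u_{0})\in A(D)$, then $(u_{0},u_{1},u_{2},u_{0})$ is a closed walk of length $3\le k$ containing the consecutive vertices $u_{0},u_{1}$, so by Lemma \ref{walks} (c) the arc $(u_{0},u_{1})$ of $C$ is symmetric in $C_{H}^{k-1}(D)$, and we are done; otherwise $(u_{0},u_{2})\in A(D)$. Proceeding inductively, suppose $(u_{0},u_{j})\in A(D)$; then $(u_{0},u_{j},u_{j+1})$ has length $2$, so $u_{0}$ and $u_{j+1}$ are adjacent, and either $(u_{j+1},u_{0})\in A(D)$—giving the closed $3$-walk $(u_{0},u_{j},u_{j+1},u_{0})$ through the consecutive pair $u_{j},u_{j+1}$ and hence, by Lemma \ref{walks} (c), a symmetric arc $(u_{j},u_{j+1})$ of $C$—or else $(u_{0},u_{j+1})\in A(D)$, continuing the induction. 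If the process never terminates early, it yields $(u_{0},u_{n-1})\in A(D)$; combined with the arc $(u_{n-1},u_{n})=(u_{n-1},u_{0})\in A(D)$ of $C$, the pair $\{u_{0},u_{n-1}\}$ is symmetric in $D$, so by Lemma \ref{walks} (b) the arc $(u_{n-1},u_{0})$ of $C$ is symmetric in $C_{H}^{k-1}(D)$. In every case $C$ carries a symmetric arc.

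I expect the delicate point to be the Duchet requirement that the symmetric arc lie \emph{on} the cycle $C$, rather than just being some symmetric pair of $C_{H}^{k-1}(D)$: a naive chord-shortcutting of a long cycle of $D$ would produce symmetric arcs between non-consecutive vertices, which are useless for Theorem \ref{duchet}. The hub construction above is designed precisely to circumvent this, since every short closed walk it produces passes through a pair of \emph{consecutive} vertices of $C$, so Lemma \ref{walks} (c) always returns an arc of $C$. Once every cycle of $C_{H}^{k-1}(D)$ is shown to have a symmetric arc, Theorem \ref{duchet} yields a kernel of $C_{H}^{k-1}(D)$, and Lemma \ref{closure} converts it back into a $(k,H)$-kernel of $D$; note that the hypothesis $k\ge 4$ is exactly what is needed for Lemma \ref{quasi} (length-$3$ witnessing paths) to be available.
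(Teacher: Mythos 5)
Your proposal is correct and takes essentially the same approach as the paper: reduce via Lemma \ref{closure} and Theorem \ref{duchet}, use Lemma \ref{quasi} (plus the length-$1$ path observation) to force any closure cycle lacking a symmetric arc to be a cycle of $D$, and then propagate arcs from the hub $u_{0}$ along the cycle by quasi-transitivity. The paper packages your forward induction as an extremal choice $q=\max\{i:(u_{0},u_{i})\in A(D)\}$ and closes with the $2$-path $(u_{q+1},u_{0},u_{q})$ via Lemma \ref{walks} (b) rather than your closed $3$-walk via Lemma \ref{walks} (c), but this is the same argument.
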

\begin{proof}
First, we will show that every cycle in $C_{H}^{k-1}(D)$ has a symmetric arc, then, by applying Theorem \ref{duchet} and Lemma \ref{closure}, we will conclude that $D$ has a $(k,H)$-kernel. Proceeding by contradiction, suppose that there exist a cycle in  $C_{H}^{k-1}(D)$, say $C=(u_{0}, \ldots , u_{n})$, with no symmetric arcs. Clearly, $n \geq 3$.

It follows from Lemma \ref{quasi} that, for every $i \in  \{ 0, \ldots , n-1\}$, $u_{i}$ and $u_{i+1}$ are adjacent in $D$ (indices are taken modulo $n$). Moreover, since $C$ has no symmetric arcs in $C_{H}^{k-1}(D)$, we can conclude that  for every $i \in  \{ 0, \ldots , n-1\}$, $(u_{i}, u_{i+1}) \in A(D)$, which implies that $C$ is a cycle in $D$. Let $q = max \{ i \in \{1, \ldots , n-1 \} : (u_{0}, u_{i}) \in A(D) \}$. 

Notice that $q \leq n-2$, otherwise, $(u_{n-1},u_{0})$ is a symmetric arc in $D$, which implies that $C$ has a symmetric arc in $C_{H}^{k-1}(D)$, contradicting the choice of $C$. Hence, $(u_{0}, u_{q}, u_{q+1})$ is a path in $D$.
 Since $D$ is a quasi-transitive digraph, we have that either $(u_{q+1}, u_{0}) \in A(D)$ or $(u_{0}, u_{q+1}) \in A(D)$. By the choice of $q$, it follows that $(u_{q+1}, u_{0})  \in A(D)$.
  We can conclude that $(u_{q+1}, u_{0}, u_{q})$ is a path in $D$ with length at most $k-1$, and, by Lemma \ref{walks} (b), we have that $(u_{q+1}, u_{q}) \in A(C_{H}^{k-1}(D))$, contradicting the choice of $C$. Therefore, every cycle in $C_{H}^{k-1}(D)$ has a symmetric arc.

It follows from Theorem \ref{duchet} that $C_{H}^{k-1}(D)$ has a kernel, which implies that $D$ has a $(k,H)$-kernel (Lemma \ref{closure}).
\end{proof}

\subsection{3-quasi-transitive digraphs}

The following lemma for $3$-quasi-transitive digraphs will be useful.

\begin{lemma}\cite{3qt}
\label{distance3quasitrans}
Let $D$ be a $3$-quasi-transitive digraph, and $\{u,v \} \subseteq V(D)$ such that there exists a $uv$-walk in $D$. The following assertions holds:
	\begin{enumerate}[a)]
	\item If $d(u,v) = 3$ or $d(u,v) \geq 5$, then $d(v,u)=1$.
	
	\item If $d(u,v)=4$, then $d(v,u) \leq 4$.
	\end{enumerate}
\end{lemma}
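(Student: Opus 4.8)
The plan is to reduce everything to the behaviour of short reverse paths along a $uv$-geodesic and to exploit $3$-quasi-transitivity on its length-$3$ subpaths. Fix a $uv$-geodesic $P=(x_0,x_1,\dots,x_m)$ with $x_0=u$, $x_m=v$ and $m=d(u,v)$. Two observations drive the argument. First, $(x_i,x_{i+1})\in A(D)$ for all $i$, while minimality of $P$ forbids any ``long forward'' arc: if $p>q+1$ and $x_p,x_q$ are adjacent, then $(x_q,x_p)\notin A(D)$, since it would shorten $P$, so $(x_p,x_q)\in A(D)$. Second, applying $3$-quasi-transitivity to each subpath $(x_i,x_{i+1},x_{i+2},x_{i+3})$ makes $x_{i+3}$ and $x_i$ adjacent, and by the previous remark this yields the backward arcs $(x_{i+3},x_i)\in A(D)$ for all $0\le i\le m-3$. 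For part (a) the target is to produce a $vu$-path of length exactly $3$: once $v$ and $u$ are adjacent and $d(u,v)\ge 3>1$ forces $(u,v)\notin A(D)$, we obtain $(v,u)\in A(D)$ and hence $d(v,u)=1$.

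For $d(u,v)=3$ this is immediate: the reach-$3$ arc is exactly $(x_3,x_0)=(v,u)$. For larger $d(u,v)$ I would manufacture longer backward arcs by induction on the (odd) ``reach''. The inductive claim is that for every odd $r$ with $3\le r\le m$ and every $j$ with $r\le j\le m$ one has $(x_j,x_{j-r})\in A(D)$. The base $r=3$ is the paragraph above. For the step, the length-$3$ path $(x_j,x_{j-3},x_{j-2},x_{j-r})$ --- formed by a reach-$3$ arc, the forward arc $(x_{j-3},x_{j-2})$, and a reach-$(r-2)$ arc, all on distinct vertices --- makes $x_j$ and $x_{j-r}$ adjacent, and since $r\ge 5>1$ it must be the backward arc $(x_j,x_{j-r})$. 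Taking $r=j=m$ produces $(x_m,x_0)=(v,u)$, so $d(v,u)=1$ whenever $m$ is odd, which covers $d(u,v)=3$ and all odd $d(u,v)\ge 5$.

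Part (b) is a direct construction and needs no case analysis. With $m=4$ the two reach-$3$ arcs $(x_4,x_1)$ and $(x_3,x_0)$ splice together with the forward arcs of $P$ into the $vu$-path $(x_4,x_1,x_2,x_3,x_0)$, whose five vertices are distinct; hence $d(v,u)\le 4$.

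The main obstacle is the even subcase of part (a), that is, even $d(u,v)\ge 6$. Every backward arc the above scheme produces has odd reach ($3,5,7,\dots$), because each step combines a reach-$3$ arc, one forward arc, and an inductively available odd-reach arc, and $3-1+(r-2)=r$ keeps the parity odd. Consequently the length-$3$ construction lands exactly on $x_0$ only when $m$ is odd; for even $m$ the construction instead reaches $x_1$ via the reach-$(m-1)$ arc $(x_m,x_1)$, and closing this last gap from $x_1$ to $x_0$ does not follow from the arcs read off a single geodesic. This is the step I would scrutinise most carefully, since settling the even case appears to require additional structural facts about $3$-quasi-transitive digraphs, and it is where I would expect the argument for $d(v,u)=1$ to be most delicate.
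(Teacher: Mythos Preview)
The paper does not prove this lemma at all: it is quoted verbatim from the cited reference \cite{3qt} and used as a black box, with no argument supplied. So there is no in-paper proof to compare your attempt against.

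As for the proposal itself, your treatment of $d(u,v)=3$, of part~(b) with $d(u,v)=4$, and of odd $d(u,v)\ge 5$ is correct: the reach-$3$ backward arcs along a geodesic and the odd-reach induction are exactly the right mechanisms, and the verifications of distinctness and of the forbidden ``long forward'' direction are clean. However, part~(a) is not finished. You yourself flag the even case $d(u,v)\ge 6$ as the ``main obstacle'', and indeed nothing in your write-up establishes $(v,u)\in A(D)$ there: every backward arc your scheme manufactures has odd reach, so from $x_m$ you only land on $x_1,x_3,x_5,\dots$, and no $3$-path built from forward arcs and odd-reach backward arcs on the geodesic has $x_m$ and $x_0$ as its endpoints. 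This is a genuine gap, not merely a delicate step to be scrutinised; the lemma as stated covers all $d(u,v)\ge 5$, so the even subcase must be proved, and doing so requires an additional idea (in \cite{3qt} the argument uses further structural facts about how $3$-quasi-transitivity constrains adjacencies among the $x_i$ beyond the odd-reach arcs). Until that case is handled, the proposal does not constitute a proof of the lemma.
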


By the previous lemma, we have the following corollary.

\begin{cor}
\label{geo3qt}
Let $D$ be an $H$-colored $3$-quasi-transitive digraph, $k \geq 5$, and $(u,v) \in A(C_{H}^{k-1}(D))$. If $(u,v)$ is an asymmetric arc in $C_{H}^{k-1}(D)$, then $d_{D}(u,v) \leq 2$.
\end{cor}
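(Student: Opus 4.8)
The plan is to argue by contradiction, assuming $d_{D}(u,v) \geq 3$ and deriving that $(v,u) \in A(C_{H}^{k-1}(D))$, which directly contradicts the hypothesis that $(u,v)$ is an asymmetric arc in the closure. First I would observe that the hypothesis $(u,v) \in A(C_{H}^{k-1}(D))$ guarantees, by definition of the $(k-1,H)$-closure, the existence of an actual $uv$-path in $D$; in particular there is a $uv$-walk in $D$, so $d_{D}(u,v)$ is well-defined and Lemma \ref{distance3quasitrans} becomes applicable (its standing hypothesis being precisely the existence of such a walk).

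With that in place, I would split into the two cases covered by Lemma \ref{distance3quasitrans}. If $d_{D}(u,v)=3$ or $d_{D}(u,v) \geq 5$, then part (a) yields $d_{D}(v,u)=1$, so $(v,u) \in A(D)$ and there is a $vu$-walk of length $1 \leq k-1$. If instead $d_{D}(u,v)=4$, then part (b) gives $d_{D}(v,u) \leq 4 \leq k-1$ (this is exactly where $k \geq 5$ enters), so there is a $vu$-walk of length at most $k-1$. In both cases Lemma \ref{walks} (b) immediately delivers $(v,u) \in A(C_{H}^{k-1}(D))$, contradicting the assumed asymmetry of $(u,v)$. Hence $d_{D}(u,v) \leq 2$.

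This argument is essentially routine casework, so I do not expect a genuine obstacle. The only points requiring care are verifying that the walk-existence hypothesis of Lemma \ref{distance3quasitrans} is met—which is ensured by closure membership producing a bona fide $uv$-path rather than merely a walk—and confirming that the length bound $4 \leq k-1$ holds, which is the sole place the hypothesis $k \geq 5$ is used. Everything else reduces to combining the distance bounds of Lemma \ref{distance3quasitrans} with the closure criterion of Lemma \ref{walks} (b).
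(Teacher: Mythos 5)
Your proof is correct and follows essentially the same route as the paper: contradiction from $d_{D}(u,v) \geq 3$, invoking Lemma \ref{distance3quasitrans} to obtain a $vu$-path of length at most $4 \leq k-1$, and then Lemma \ref{walks} (b) to place $(v,u)$ in $A(C_{H}^{k-1}(D))$, contradicting asymmetry. Your added care---verifying the walk-existence hypothesis via the closure definition and flagging $4 \leq k-1$ as the sole use of $k \geq 5$---only makes explicit what the paper leaves implicit.
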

\begin{proof}
Proceeding by contradiction, suppose that $d_{D}(u,v) \geq 3$. It follows from Lemma \ref{distance3quasitrans} that there exists a $vu$-path with length at most $4$ in $D$. By Lemma \ref{walks} (b), we can conclude that $(v,u) \in A(C_{H}^{k}(D))$, which is no possible since $(u,v)$ is an asymmetric arc in $C_{H}^{k-1}(D)$. Hence, $d_{D}(u,v) \leq 2$. 
\end{proof}

A more elaborated proof will show that every $H$-colored $3$-quasi-transitive digraph has a $(k,H)$-kernel for every $k \geq 5$.

\begin{theorem}
\label{pan3qtr}
Let $D$ be an $H$-colored $3$-quasi-transitive digraph. For every $k \geq 5$, $D$ has a $(k,H)$-kernel.
\end{theorem}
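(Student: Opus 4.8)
The plan is to route everything through Lemma \ref{closure}: since $D$ has a $(k,H)$-kernel if and only if $C_{H}^{k-1}(D)$ has a kernel, it suffices to produce a kernel in the closure, and for that I would appeal to Duchet's theorem (Theorem \ref{duchet}). So the entire argument reduces to showing that \emph{every cycle of $C_{H}^{k-1}(D)$ has a symmetric arc}. I would argue by contradiction: suppose some cycle $C=(u_{0},\ldots ,u_{n}=u_{0})$ of $C_{H}^{k-1}(D)$ has no symmetric arc (indices mod $n$), so $n\geq 3$ and every arc $(u_{i},u_{i+1})$ is asymmetric, i.e. $(u_{i+1},u_{i})\notin A(C_{H}^{k-1}(D))$.

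The first step is to show that $C$ is actually a cycle of $D$. Concatenating, for each arc of $C$, a $D$-path realizing it gives a $u_{i+1}u_{i}$-walk in $D$, so $d_{D}(u_{i+1},u_{i})$ is finite. If it were at most $4$ (hence $\leq k-1$, as $k\geq 5$), Lemma \ref{walks}(b) would put $(u_{i+1},u_{i})$ in the closure, contradicting asymmetry; thus $d_{D}(u_{i+1},u_{i})\geq 5$, and Lemma \ref{distance3quasitrans}(a) then forces $d_{D}(u_{i},u_{i+1})=1$, i.e. $(u_{i},u_{i+1})\in A(D)$. (Alternatively, Corollary \ref{geo3qt} gives $d_{D}(u_{i},u_{i+1})\leq 2$ and the same reasoning excludes distance $2$.) Hence $C$ is a cycle of $D$. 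If $n\leq k$, then $C$ is a closed walk of length at most $k$ and Lemma \ref{walks}(c) already yields a symmetric arc; so from now on $n\geq k+1\geq 6$.

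The heart of the argument is a chord-building induction on the $D$-cycle $C$. For each $i$ the path $(u_{i},u_{i+1},u_{i+2},u_{i+3})$ has length $3$, so $3$-quasi-transitivity makes $u_{i}$ and $u_{i+3}$ adjacent; were $(u_{i+3},u_{i})\in A(D)$, the $4$-cycle $(u_{i},u_{i+1},u_{i+2},u_{i+3},u_{i})$ would have length $4\leq k$ and Lemma \ref{walks}(c) would make the arc $(u_{i},u_{i+1})$ of $C$ symmetric; hence $(u_{i},u_{i+3})\in A(D)$. The same $4$-cycle trick propagates forward chords: from $(u_{0},u_{j})\in A(D)$, the length-$3$ path $(u_{0},u_{j},u_{j+1},u_{j+2})$ yields $(u_{0},u_{j+2})\in A(D)$, since otherwise $(u_{0},u_{j},u_{j+1},u_{j+2},u_{0})$ is a length-$4$ cycle forcing a symmetric arc of $C$. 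Starting from $j=3$ and iterating, I obtain $(u_{0},u_{j})\in A(D)$ for every odd $j$ up to the largest odd value below $n$; the only bookkeeping is that the four vertices of each auxiliary cycle be distinct, which holds because $n\geq 6$.

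To finish I would split on the parity of $n$. If $n$ is even, the induction reaches $(u_{0},u_{n-1})\in A(D)$, which together with the arc $(u_{n-1},u_{0})$ of $C$ shows $(u_{n-1},u_{0})$ is symmetric in the closure; if $n$ is odd, it reaches $(u_{0},u_{n-2})\in A(D)$, and then $(u_{0},u_{n-2},u_{n-1},u_{0})$ is a $3$-cycle of $D$ of length $3\leq k$, so Lemma \ref{walks}(c) makes the arc $(u_{n-2},u_{n-1})$ of $C$ symmetric. Either way $C$ has a symmetric arc, the desired contradiction, so every cycle of $C_{H}^{k-1}(D)$ has a symmetric arc and the theorem follows. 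The main obstacle I anticipate is exactly this long-cycle regime $n\geq k+1$: short cycles collapse at once through Lemma \ref{walks}(c), but long ones require manufacturing chords via $3$-quasi-transitivity and then handling the parity and wraparound carefully; the preliminary reduction confirming that $C$ lives inside $D$ (ruling out the genuinely distance-$2$ steps still permitted by Corollary \ref{geo3qt}) is the other delicate point.
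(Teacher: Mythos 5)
Your proof is correct, and while it shares the paper's outer skeleton (reduce via Lemma \ref{closure} to finding a kernel of $C_{H}^{k-1}(D)$, then apply Theorem \ref{duchet} after showing every cycle of the closure has a symmetric arc), the core of your argument is genuinely different from, and leaner than, the paper's. The paper only extracts from Corollary \ref{geo3qt} that each arc of a putative symmetric-arc-free cycle $C$ is realized by a geodesic of length at most $2$, glues these geodesics into a closed walk $C'$ in $D$, and then works on $C'$ with the extremal out-neighbor $q=\max\{i:(u_{1},x_{i})\in A(D)\}$, two technical claims, and a delicate distinctness analysis to manufacture a short return path. You instead observe that for an arc $(u_{i},u_{i+1})$ of $C$ a return $u_{i+1}u_{i}$-walk exists in $D$ (concatenate the realizing paths around $C$), so asymmetry forces $d_{D}(u_{i+1},u_{i})\geq k\geq 5$ by Lemma \ref{walks}(b), and then Lemma \ref{distance3quasitrans}(a) gives $d_{D}(u_{i},u_{i+1})=1$: the cycle $C$ is literally a cycle of $D$, a strictly stronger reduction than the paper's. (Note this strengthening is only valid on a cycle, where the return walk exists, not for a bare asymmetric closure arc, which is why it does not contradict Corollary \ref{geo3qt}; your parenthetical appeal to that corollary is redundant, as your main line already yields distance $1$.) After disposing of $n\leq k$ with Lemma \ref{walks}(c), your chord propagation is sound: $(u_{0},u_{3})\in A(D)$, and from $(u_{0},u_{j})\in A(D)$ the path $(u_{0},u_{j},u_{j+1},u_{j+2})$ (its four vertices distinct since $3\leq j\leq n-3$) forces $(u_{0},u_{j+2})\in A(D)$, because the opposite orientation would create a closed walk of length $4\leq k$ through the arc $(u_{j},u_{j+1})$ of $C$, contradicting asymmetry via Lemma \ref{walks}(c); the parity endgame then closes both cases correctly ($n$ even: $(u_{0},u_{n-1})\in A(D)$ symmetrizes $(u_{n-1},u_{0})$; $n$ odd: the $3$-cycle $(u_{0},u_{n-2},u_{n-1},u_{0})$ symmetrizes $(u_{n-2},u_{n-1})$). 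What the paper's route buys is reusability: its geodesic-gluing template is repeated almost verbatim for multipartite tournaments (Lemma \ref{distrpartite} and Theorem \ref{panpart}), where no analogue of your distance-$1$ reduction is available; what yours buys is brevity and the stronger structural byproduct that any cycle of $C_{H}^{k-1}(D)$ without symmetric arcs is already a cycle of $D$.
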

\begin{proof}
First, we will show that every cycle in $C_{H}^{k-1}(D)$ has a symmetric arc, then, by applying Theorem \ref{duchet} and Lemma \ref{closure}, we will conclude that $D$ has a $(k,H)$-kernel. Proceeding by contradiction, suppose that there exist a cycle in  $C_{H}^{k-1}(D)$, say $C=(u_{0}, \ldots , u_{n})$, with no symmetric arcs. Clearly, $n \geq 3$. 

For every $i \in \{0, \ldots , n-1\}$, consider a $u_{i}u_{i+1}$-geodesic in $D$, say $W_{i}$ (indices are taken modulo $n$). It follows from Corollary \ref{geo3qt} that for every $i \in \{ 0, \ldots , n-1\}$, $l(W_{i}) \leq 2$. Let $C'=\cup_{i=0}^{n-1}W_{i}$, and assume that $C' =(x_{0}, \ldots , x_{l})$. Notice that $C'$ is a closed walk in $D$, and $x_{0}=u_{0}=u_{l}$.

\begin{description}
\item \textbf{Claim 1.} For every $i \in \{ l-3,l-2,l-1, l\}$, $(u_{1}, x_{i}) \notin A(D)$.

Proceeding by contradiction, suppose that there exists  $i \in \{ l-3, l-2,l-1,l \}$, such that $(u_{1}, x_{i}) \in A(D)$. Since $(u_{0}, u_{1})$ is an asymmetric arc in $C^{k-1}_{H}(D)$, then $i \neq l$.  It follows that $(u_{1},x_{i}) \cup (x_{i}, C', x_{0})$ is a $u_{1}u_{0}$-walk in $D$ with length at most 4, which implies that $(u_{1}, u_{0}) \in A(C_{H}^{k-1}(D))$ (Lemma \ref{walks} (b)), contradicting the choice of $C$, and the claim holds.
\end{description}

On the other hand, since $u_{1} \in V(C')$, then $u_{1}$ has at least one out-neighbor in $V(C')$, and let $q=max \{ i \in \{ 1, \ldots , l \} : (u_{1}, x_{i}) \in A(D)\}$. The following assertions will be useful:
\begin{enumerate}[(i)]
\item $q \leq l -4$.

It follows from Claim 1.

\item For every $t \in \{q+1, q+2, q+3, q+4\}$, $x_{q} \neq x_{t}$.

It follows from the choice of $q$.

\item For every $t \in \{ q, q+1, q+2, q+3, q+4\}$, $u_{1} \neq x_{t}$.

If $q < l-4$, then by the choice of $q$, we have that $u_{1} \neq x_{i}$ for every $i \in \{ q, q+1, q+2, q+3, q+4\}$. If $q=l-4$ (that is, $x_{q+4}=x_{l}$), then by the choice of $q$, we have that $u_{1} \neq x_{i}$ for every $i \in \{ q, q+1, q+2, q+3\}$ and, since $u_{0} = x_{l}$, we have that $u_{1} \neq x_{l}$.
\end{enumerate}

It follows from (ii) and (iii) that $(u_{1}, x_{q}, x_{q+1}, x_{q+2})$ is a path in $D$. Since $D$ is a $3$-quasi-transitive digraph, then either $(u_{1},x_{q+2}) \in A(D)$ or $(x_{q+2}, u_{1}) \in A(D)$. By the choice of $q$, we have that $(x_{q+2}, u_{1}) \in A(D)$.  

\begin{description}
\item \textbf{Claim 2.} There exists $t \in \{0, \ldots , n-1\}$ such that $x_{q+1} = u_{t}$ and $x_{q+3} = u_{t+1}$.

First, we will show that  $(x_{q}, x_{q+1}, x_{q+2})$ has no two consecutive vertices of $C$...(*). Proceeding by contradiction, suppose that $(x_{q}, x_{q+1}, x_{q+2})$ has two consecutive vertices of $C$, say $u_{r}$ and $u_{r+1}$. Since $(u_{1}, x_{q}, x_{q+1}, x_{q+2}, u_{1})$ is a $4$-cycle in $D$, it follows from Lemma \ref{walks} (c) that $u_{r}$ and $u_{r+1}$ are joined by a symmetric arc in $C_{H}^{k-1}(D)$, which is no possible by the choice of $C$. Therefore, (*) holds.

On the other hand, from the choice of $C'$, we can conclude that there exists $t \in \{0, \ldots , n-1\}$ such that $A(W_{t}) \cap \{ (x_{q}, x_{q+1}),(x_{q+1}, x_{q+2}) \} \neq \emptyset$. Remark that $l(W_{t}) \leq 2$. It follows from (*) that $x_{q+1} = u_{t}$ and $x_{q+2} = u_{t+1}$, and the claim holds.
\end{description}

Now, we will show that $(x_{q}, x_{q+1}, \ldots , x_{q+4})$ is a path in $D$. Remark that $x_{q} \notin \{ x_{q+1}, x_{q+2}, x_{q+3}, x_{q+4}\}$ (because of (ii)). In order to show that  $x_{q+1} \notin \{ x_{q+2}, x_{q+3}, x_{q+4}\}$, notice that, since $(x_{q+1}, x_{q+2}) \in A(D)$, then $x_{q+1} \neq x_{q+2}$. Since $u_{t} = x_{q+1}$ and $u_{t+1}=x_{q+3}$ (Claim 2), then $x_{q+1} \neq x_{q+3}$. Now, $x_{q+1} \neq x_{q+4}$, otherwise, $(u_{t+1}, u_{t}) \in A(C_{H}^{k-1}(D))$, which is no possible by the choice of $C$. In order to show that  $x_{q+2} \notin \{ x_{q+3}, x_{q+4}\}$, we have that $x_{q+2} \neq x_{q+3}$ because they are joined by an arc in $D$. Moreover, we can see that $x_{q+2} \neq  x_{q+4}$, otherwise, $(u_{t+1}, x_{q+2}, u_{1}, x_{q}, u_{t})$ is a $u_{t+1}u_{t}$-walk in $D$ with length at most $4$, and by Lemma \ref{walks} (b), $(u_{t+1}, u_{t}) \in A(C_{H}^{k-1}(D))$, which is no possible by the choice of $C$. Finally, $x_{q+3} \neq x_{q+4}$ because they are joined by an arc in $D$. Hence, $(x_{q}, x_{q+1}, \ldots , x_{q+4})$ is a path in $D$.

Since $D$ is a $3$-quasi-transitive digraph, then we have that either $(x_{q}, x_{q+3}) \in A(D)$ or $(x_{q+3}, x_{q}) \in A(D)$. If $(x_{q+3}, x_{q})  \in A(D)$, then $(x_{q+3}, x_{q}, x_{q+1})$ is a $u_{t+1}u_{t}$-path in $D$ with length at most $k-1$, which implies that $(u_{t+1}, u_{t}) \in A(C_{H}^{k-1}(D))$, contradicting the choice of $C$. Hence, $(x_{q}, x_{q+3}) \in A(D)$. It follows that $(u_{1}, x_{q}, x_{q+3}, x_{q+4})$ is a $3$-path in $D$, concluding that either $(u_{1}, x_{q+4}) \in A(D)$ or $(x_{q+4}, u_{1}) \in A(D)$. By the choice of $q$, we have that $(x_{q+4}, u_{1}) \in A(D)$. In that case, $(x_{q+3}, x_{q+4}, u_{1}, x_{q}, x_{q+1})$ is a $u_{t+1}u_{t}$-path in $D$ with length at most $k-1$, concluding that $(u_{t+1}, u_{t}) \in A(C_{H}^{k-1}(D))$ (Lemma \ref{walks} (b)), contradicting the choice of $C$.

Therefore, every cycle in $C_{H}^{k-1}(D)$ has a symmetric arc, concluding that $C_{H}^{k-1}(D)$ has a kernel (Theorem \ref{duchet}), which implies that $D$ has a $(k,H)$-kernel (Lemma \ref{closure}).
\end{proof}

It follows from Theorem \ref{pan3qtr} and Theorem \ref{pan3qtr} the following conjecture.

\begin{conj}
 If $D$ is an $H$-colored $r$-quasi-transitive digraph ($r \geq 2$), then $D$ has a $(k,H)$-kernel for every $k \geq r+2$.
\end{conj}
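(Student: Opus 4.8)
The plan is to reproduce the two-step scheme that already settled the cases $r=2$ (Theorem \ref{panqker}) and $r=3$ (Theorem \ref{pan3qtr}): first show that, for $k \geq r+2$, every cycle of the closure $C_H^{k-1}(D)$ contains a symmetric arc, and then invoke Theorem \ref{duchet} together with Lemma \ref{closure} to produce a $(k,H)$-kernel of $D$. Everything therefore reduces to the symmetric-arc statement in the closure, and the bound $k \geq r+2$ is exactly what the base cases suggest ($2+2=4$ and $3+2=5$).

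The first ingredient I would isolate is a distance lemma for $r$-quasi-transitive digraphs generalizing Lemma \ref{distance3quasitrans}, namely: if there is a $uv$-path and $d_D(u,v) \geq r$, then $d_D(v,u) \leq r+1$, and in fact $d_D(v,u)=1$ unless $d_D(u,v)=r+1$. The mechanism is as follows: along a $uv$-geodesic $u=w_{0}, \ldots , w_{d}=v$, applying $r$-quasi-transitivity to each length-$r$ subpath $(w_{i}, \ldots , w_{i+r})$ forces the backward arc $(w_{i+r}, w_{i}) \in A(D)$, since the forward arc $(w_{i}, w_{i+r})$ is impossible on a geodesic. Splicing these backward arcs with short forward stretches of the geodesic yields a short $vu$-path, and a further application of $r$-quasi-transitivity to a length-$r$ such path gives $(v,u) \in A(D)$ when $d_D(u,v) \neq r+1$. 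From this lemma the exact analogue of Corollary \ref{geo3qt} follows at once: for $k \geq r+2$, if $(u,v)$ is an asymmetric arc of $C_H^{k-1}(D)$ then $d_D(u,v) \leq r-1$, because otherwise a $vu$-path of length at most $r+1 \leq k-1$ together with Lemma \ref{walks} (b) would make $(u,v)$ symmetric in the closure.

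With this in hand I would run the argument of Theorem \ref{pan3qtr} in the same spirit. Assume for contradiction a cycle $C=(u_{0}, \ldots , u_{n})$ in $C_H^{k-1}(D)$ with no symmetric arc ($n \geq 3$), replace each arc $(u_{i}, u_{i+1})$ by a $u_{i}u_{i+1}$-geodesic $W_{i}$ of length at most $r-1$ in $D$, and concatenate them into a closed walk $C'=(x_{0}, \ldots , x_{l})$ with $x_{0}=u_{0}=u_{l}$. The generalized Claim 1 would assert $(u_{1}, x_{i}) \notin A(D)$ for the last $r+1$ indices $i$, since such an arc would close a $u_{1}u_{0}$-walk of length at most $r+1 \leq k-1$; setting $q=\max \{ i : (u_{1}, x_{i}) \in A(D) \}$ one obtains $q \leq l-(r+2)$. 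Building the path $(x_{q}, x_{q+1}, \ldots , x_{q+r+1})$ in $D$ (after checking, as in the proof of Theorem \ref{pan3qtr}, that these vertices are pairwise distinct using the no-symmetric-arc hypothesis and Lemma \ref{walks} (c)), successive applications of $r$-quasi-transitivity produce a short backward $u_{t+1}u_{t}$-path in $D$, whence $(u_{t+1}, u_{t}) \in A(C_H^{k-1}(D))$, contradicting the choice of $C$.

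The hard part will be the distance lemma for arbitrary $r$. For $r=2$ and $r=3$ it is precisely Lemmas \ref{quasibang} and \ref{distance3quasitrans}, but for general $r$ the bookkeeping of the backward arcs $(w_{i+r}, w_{i})$ is far more delicate: the spliced $vu$-paths produced by the counting argument have length about $\lceil d/r \rceil (r+1) - d$, which is at most $r$ only for $d$ in a bounded range, so establishing $d_D(v,u)=1$ for all large $d_D(u,v)$ (rather than merely a bound growing like $d/r$) appears to require genuinely new structural control, and is presumably why the statement is left as a conjecture. A secondary difficulty is that the distinctness analysis of the vertices $x_{q}, \ldots , x_{q+r+1}$ and the resulting case distinctions in the combinatorial core grow with $r$, so a uniform or inductive formulation of that step would be needed to avoid an unbounded case analysis.
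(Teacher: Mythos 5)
This statement is left in the paper as a \emph{conjecture} --- the paper contains no proof of it --- and your proposal, as you yourself concede in the final paragraph, is not a proof either: it is a program whose decisive ingredient is missing. The reduction you set up (show every cycle of $C_{H}^{k-1}(D)$ has a symmetric arc, then apply Theorem \ref{duchet} and Lemma \ref{closure}) is indeed exactly how the paper settles $r=2$ (Theorem \ref{panqker}) and $r=3$ (Theorem \ref{pan3qtr}), so the architecture is the expected one; but everything hinges on your ``distance lemma'' ($d_{D}(u,v)\geq r$ implies $d_{D}(v,u)\leq r+1$, with $d_{D}(v,u)=1$ unless $d_{D}(u,v)=r+1$), which you assert rather than prove. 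Your own splicing mechanism does not deliver it: chaining backward arcs $(w_{i+r},w_{i})$ with forward stretches of a geodesic of length $d$ yields, as you compute, a $vu$-walk of length $\lceil d/r\rceil(r+1)-d$, and this equals $r+2$ whenever $d\equiv 1 \pmod{r}$ (already at $d=2r+1$), exceeding the claimed bound $r+1$. Worse, bootstrapping down to $d_{D}(v,u)=1$ requires exhibiting a $vu$-\emph{path} of length exactly $r$ --- $r$-quasi-transitivity as defined in the paper applies to paths, not walks --- whereas the spliced objects are a priori walks with possible vertex repetitions. Controlling precisely this is the substance of Lemma \ref{distance3quasitrans}, itself a nontrivial result imported from \cite{3qt} for the case $r=3$, and no analogue for general $r$ is established in the paper or in your proposal; it is not even clear that the clean dichotomy you posit is the right statement for all $r$, since already at $r=3$ the exceptional value $d_{D}(u,v)=4$ behaves differently from all larger distances.

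Even granting the distance lemma, the combinatorial core is only gestured at: the pairwise distinctness of $x_{q},\ldots,x_{q+r+1}$, the analogue of Claim 2 locating two consecutive vertices $u_{t}$, $u_{t+1}$ of $C$ on the spliced walk, and the final shortcut producing a $u_{t+1}u_{t}$-path of $H$-length at most $k-1$ all required, in the proof of Theorem \ref{pan3qtr}, delicate ad hoc verifications via Lemma \ref{walks} (b) and (c); you acknowledge that this case analysis grows with $r$ and offer no uniform or inductive replacement for it. So the verdict is: your plan faithfully extrapolates the paper's method for $r=2,3$ and correctly diagnoses the two obstacles (the distance lemma and the unbounded case analysis), but it closes neither, and the statement remains exactly what the paper says it is --- an open conjecture, not a theorem with a proof your attempt could be measured against.
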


\subsection{$r$-quasi-transitive digraphs}

The Theorem \ref{tqtrc} will show that, under certain conditions on the cycles of an $H$-colored $r$-quasi-transitive digraph, it is possible to guarantee the existence of $(k,H)$-kernels in such digraphs. First, we have the following result.

\begin{theorem}
\label{cycleskquasitrans}
Let $D$ be an $H$-colored $r$-quasi-transitive digraph ($r \geq 2$), such that every cycle in $D$ with length $r+1$ is an $H$-cycle, and $\{ u, v \} \subseteq V(D)$. If $d_{D}(u,v) \geq r$ and $T$ is a $uv$-geodesic in $D$, then $T$ is an $H$-path.
\end{theorem}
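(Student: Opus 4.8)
The plan is to show that the geodesic $T=(x_{0}, \ldots , x_{m})$ (with $x_{0}=u$, $x_{m}=v$, and $m=d_{D}(u,v) \geq r$) has no obstruction at any of its internal vertices. By the definition of $H$-path this is exactly what is needed, since an open path is an $H$-path precisely when $O_{H}(T)=\emptyset$, i.e. when $(\rho(x_{s-1},x_{s}),\rho(x_{s},x_{s+1})) \in A(H)$ for every $s \in \{1,\ldots,m-1\}$.

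First I would establish a \emph{backward arc} structure along $T$. Fix $i$ with $0 \leq i \leq m-r$ and consider the subpath $(x_{i}, \ldots , x_{i+r})$, which has length $r$. Since subpaths of a geodesic are geodesics, $d_{D}(x_{i},x_{i+r})=r$. By $r$-quasi-transitivity the endpoints $x_{i}$ and $x_{i+r}$ are adjacent, and because their distance is $r \geq 2$ the arc cannot be $(x_{i},x_{i+r})$; hence $(x_{i+r},x_{i}) \in A(D)$. Consequently $C_{i}=(x_{i},x_{i+1}, \ldots , x_{i+r},x_{i})$ is a cycle of length $r+1$ in $D$ (its vertices are distinct, as they lie on the path $T$), so by hypothesis $C_{i}$ is an $H$-cycle.

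The next step is to read off obstruction information from these cycles. Since $C_{i}$ is an $H$-cycle it has no obstruction at any vertex; in particular, for every interior index $s$ with $i+1 \leq s \leq i+r-1$, the predecessor and successor of $x_{s}$ inside $C_{i}$ are $x_{s-1}$ and $x_{s+1}$, exactly as in $T$, so $(\rho(x_{s-1},x_{s}),\rho(x_{s},x_{s+1})) \in A(H)$, i.e. $T$ has no obstruction at $x_{s}$. It then remains a purely combinatorial check that every internal vertex $x_{j}$ of $T$ (with $1 \leq j \leq m-1$) is an interior vertex of at least one such cycle: one verifies that the index range $\max(0,j-r+1) \leq i \leq \min(j-1,m-r)$ is nonempty, which follows from $r \geq 2$ and $m \geq r$. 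Choosing such an $i$ shows $T$ has no obstruction at $x_{j}$, and since $j$ was arbitrary, $O_{H}(T)=\emptyset$, so $T$ is an $H$-path.

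I expect the main obstacle to be the bookkeeping in the final coverage argument, namely ensuring that the boundary vertices $x_{1}$ and $x_{m-1}$ (interior to $C_{0}$ and $C_{m-r}$ respectively) are handled, and that the obstruction condition at an interior vertex of the cycle genuinely coincides with the one in $T$ rather than involving the closing backward arc $(x_{i+r},x_{i})$. The construction of the $(r+1)$-cycles via $r$-quasi-transitivity together with the geodesic property is the crux; once those cycles are known to be $H$-cycles, the remainder is index arithmetic.
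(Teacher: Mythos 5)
Your proposal is correct and follows essentially the same route as the paper: form $(r+1)$-cycles from length-$r$ subpaths of the geodesic together with a backward arc forced by $r$-quasi-transitivity and the geodesic property, then invoke the hypothesis that such cycles are $H$-cycles to rule out obstructions at interior vertices. The only cosmetic difference is that the paper separates the case $l(T)=r$ and, for $l(T)>r$, uses two window families (one sliding from the start, one anchored at the terminal vertex), whereas your uniform sliding-window indexing with the coverage check $\max(0,j-r+1) \leq \min(j-1,m-r)$ subsumes both cases at once.
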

\begin{proof}
Let $T=(u=x_{0}, \ldots , x_{n} = v)$. If $n=r$, it follows from the fact that $D$ is an $r$-quasi-transitive digraph that either $(u,v) \in A(D)$ or $(v,u) \in A(D)$. Since $d_{D}(u,v)\geq r$, then we have that $( v,u) \in A(D)$. Hence, $C' = T \cup (v,u)$ is a cycle with length $r+1$. By hypothesis, $C'$ is an $H$-cycle, which implies that $T$ is an $H$-path in $D$.

Now, suppose that $n > r$. In order to show that $T$ is an $H$-path, we will show that $T$ has no obstructions. Let $i \in \{1, \ldots , n-1\}$, and consider the following cases:
	\begin{description}
	\item \textbf{Case 1.} $i \in \{ 1, \ldots, n-r \}$.
	
	In this case, notice that $(x_{i-1}, x_{i}, \ldots , x_{i+r-1})$ is a path with length $r$. Since $D$ is an $r$-quasi-transitive digraph, then either $(x_{i-1}, x_{i+r-1}) \in A(D)$ or $(x_{i+r-1}, x_{i-1})\in A(D)$. It follows from the fact that $T$ is a $uv$-geodesic, that $(x_{i-1}, x_{i+r-1}) \notin A(D)$, which implies that $(x_{i+r-1}, x_{i-1})\in A(D)$. Hence, $(x_{i-1},x_{i},  \ldots , x_{i+r-1}, x_{i-1})$ is a cycle with length $r+1$, which is an $H$-cycle by hypothesis. Therefore, $(\rho (x_{i-1}, x_{i}), \rho (x_{i}, x_{i+1})) \in A(H)$.
	
	\item \textbf{Case 2.} $i \in \{ n-r+1, \ldots , n \}$
	
	In this case, notice that $(x_{n-r} , \ldots , x_{i-1}, x_{i}, x_{i+1}, \ldots ,  x_{n})$ is a path with length $r$. Since $D$ is an $r$-quasi-transitive digraph, then either $(x_{n-r}, x_{n}) \in A(D)$ or $(x_{n}, x_{n-r})\in A(D)$. It follows from the fact that $T$ is a $uv$-geodesic, that $(x_{n-r}, x_{n}) \notin A(D)$, which implies that $(x_{n}, x_{n-r})\in A(D)$. Hence, $(x_{n-r}, x_{n-r+1}, \ldots , x_{n}, x_{n-r})$ is a cycle with length $ r+1$, which is an $H$-cycle by hypothesis. Therefore, $(\rho (x_{i-1}, x_{i}), \rho (x_{i}, x_{i+1})) \in A(H)$
	\end{description}
By the previous cases, we conclude that $T$ is an $H$-path.
\end{proof}

\begin{theorem}
\label{tqtrc}
Let $D$ be an $H$-colored $r$-quasi-transitive digraph ($r \geq 2$), such that every cycle with length $r+1$ is an $H$-cycle. For every $k \geq r$, $D$ has a $(k,H)-$kernel.
\end{theorem}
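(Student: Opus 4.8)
The plan is to mirror the argument used above for $r$-transitive digraphs (the proof leading to Corollary \ref{panrtrans}), taking as candidate the same object: a \emph{kernel by paths} $N$ of $D$, which exists for every digraph by Berge's theorem \cite{19}. The $(k,H)$-independence of $N$ then comes for free, since $N$ being independent by paths means there is simply no path between any two distinct vertices of $N$, so the requirement that every such path have $H$-length at least $k$ holds vacuously. Thus the entire content of the proof lies in upgrading ``absorbent by paths'' to ``$(k-1,H)$-absorbent''.

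For the absorbency, fix $x \in V(D) \setminus N$. Since $N$ is absorbent by paths there is an $xN$-path; let $T$ be a shortest one, ending at some $w \in N$. The key remark is that a shortest $xN$-path is in particular an $xw$-geodesic: any strictly shorter $xw$-path would again be an $xN$-path, contradicting minimality, so $l(T)=d_{D}(x,w)$. Now I split on this value. If $l(T) \leq r-1$, then Lemma \ref{walks} (a) gives $l_{H}(T) \leq l(T) \leq r-1 \leq k-1$. If $l(T) \geq r$, then $T$ is an $xw$-geodesic with $d_{D}(x,w) \geq r$, so Theorem \ref{cycleskquasitrans} applies and $T$ is an $H$-path; being open (as $x \neq w$), it has $l_{H}(T)=1 \leq k-1$. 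In either case $T$ is an $xN$-path of $H$-length at most $k-1$, so $N$ is $(k-1,H)$-absorbent and hence a $(k,H)$-kernel.

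The step I expect to carry the real weight is the long-geodesic case $l(T) \geq r$. In the $r$-transitive setting Lemma \ref{ktrans} bounds every geodesic by $r-1$, so that case never occurs; but in an $r$-quasi-transitive digraph distances between reachable vertices need not be bounded, and a priori a long geodesic could accumulate many obstructions and hence large $H$-length. This is exactly where the hypothesis ``every cycle of length $r+1$ is an $H$-cycle'' is indispensable: Theorem \ref{cycleskquasitrans} converts it into the statement that long geodesics are obstruction-free, collapsing their $H$-length to $1$. The only subtlety to get right is the bookkeeping that a minimal $xN$-path is a genuine $uv$-geodesic for its own endpoint, so that Theorem \ref{cycleskquasitrans} is legitimately applicable.

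As an alternative one could follow the closure route used for the (quasi-)transitive cases, showing that every cycle of $C_{H}^{k-1}(D)$ has a symmetric arc and then invoking Theorem \ref{duchet} together with Lemma \ref{closure}. However, with Theorem \ref{cycleskquasitrans} already in hand the kernel-by-paths argument above is shorter and avoids the delicate case analysis on chords that the $3$-quasi-transitive proof required, so that is the route I would take.
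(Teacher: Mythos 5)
Your proof is correct, but it takes a genuinely different route from the paper's. The paper proves that the closure $C_{H}^{k-1}(D)$ is a \emph{transitive} digraph: given arcs $(u,v)$ and $(v,w)$ of the closure, it concatenates the two witnessing paths, takes a $uw$-geodesic $T$, and applies exactly the dichotomy you use (if $l(T)\leq k-1$, conclude via Lemma \ref{walks} (b); if $l(T)\geq k\geq r$, conclude via Theorem \ref{cycleskquasitrans} that $T$ is an $H$-path of $H$-length $1$); it then invokes K\"onig's Theorem \ref{kertrans} and Lemma \ref{closure}. You bypass the closure entirely, instead transplanting the paper's own argument for $r$-transitive digraphs (the theorem preceding Corollary \ref{panrtrans}): take Berge's kernel by paths $N$, note that path-independence makes $(k,H)$-independence hold vacuously, and run the dichotomy on a shortest $xN$-path. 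Your bookkeeping there is sound: minimality of $T$ among $xN$-paths does force $l(T)=d_{D}(x,w)$ (a shorter $xw$-path would itself be an $xN$-path), so Theorem \ref{cycleskquasitrans} is legitimately applicable, and the open $H$-path has $H$-length $1\leq k-1$. Both proofs run on the same engine, Theorem \ref{cycleskquasitrans}, but each buys something: yours is shorter, avoids the closure machinery, and in fact yields more than stated --- the single set $N$ is $(r-1,H)$-absorbent (absorption with $H$-length at most $\max\{r-1,1\}=r-1$) and vacuously $(k,H)$-independent for all $k\geq 2$, so it is simultaneously a $(k,l,H)$-kernel for every $k\geq 2$ and $l\geq r-1$, hence a $(k,H)$-kernel for every $k\geq r$ at once; the paper's version instead establishes the stronger structural fact that $C_{H}^{k-1}(D)$ is transitive, which is of independent interest and slots directly into the closure framework (Lemma \ref{closure}) used throughout the rest of the paper. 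I see no gap in your argument.
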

\begin{proof}
First, we will show that for every $k \geq r$, $C_{H}^{k-1}(D)$ is a transitive digraph. Then, by applying Theorem \ref{kertrans}, we can conclude that $C_{H}^{k-1}(D)$ has a kernel and, by Lemma \ref{closure}, $D$ has a $(k,H)$-kernel.

Consider $(u,v)$ and $(v,w)$ in $A(C_{H}^{k-1}(D))$ with $u \neq w$. It follows that there exists a $uv$-path in $D$ with $H$-length at most $k-1$, say $T_{1}$, and there exists a $vw$-path in $D$ with $H$-length at most $k-1$, say $T_{2}$. Hence, $T_{1} \cup T_{2}$ is a $uw-$walk in $D$. Now, consider a $uw$-geodesic in $D$, say $T$. If $l(T) \leq k-1$, then $(u,w) \in A(C_{H}^{k-1}(D))$ (Lemma \ref{walks} (b)). If $l(T) \geq k$, then by Theorem \ref{cycleskquasitrans} we have that $T$ is a $uw$-$H$-path, which implies that $(u,w) \in A(C_{H}^{k-1}(D))$. 

Therefore, $C_{H}^{k-1}(D)$ is a transitive digraph, which implies that $C_{H}^{k-1}(D)$ has a kernel (Theorem \ref{kertrans}). We can conclude that $D$ has a $(k, H)$-kernel (Lemma \ref{closure}). 
\end{proof}

As a consequence of the previous theorem, we have the following corollary for quasi-transitive digraphs.

\begin{cor}
Let $D$ be a quasi-transitive digraph such that every $3$-cycle is an $H$-cycle. For every $k \geq 2$, $D$ has a $(k, H)$-kernel. 
\end{cor}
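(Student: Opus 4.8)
The plan is to recognize that this corollary is nothing more than the special case $r = 2$ of Theorem \ref{tqtrc}, so the entire argument reduces to verifying that the hypotheses match under this substitution. First I would observe that, by the definitions given in the introduction, a \emph{quasi-transitive} digraph is precisely a $2$-\emph{quasi-transitive} digraph: the defining condition of quasi-transitivity (whenever there is a $uv$-path of length $2$, the vertices $u$ and $v$ are joined by an arc) is exactly the condition of $r$-quasi-transitivity specialized to $r = 2$. Hence any quasi-transitive digraph $D$ satisfies the structural hypothesis of Theorem \ref{tqtrc} with $r = 2$.

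Next I would check that the cycle hypothesis translates correctly. Theorem \ref{tqtrc} requires that every cycle of length $r + 1$ be an $H$-cycle; setting $r = 2$, this is the requirement that every $3$-cycle be an $H$-cycle, which is verbatim the hypothesis of the corollary. With both hypotheses matched, I would invoke Theorem \ref{tqtrc} directly with $r = 2$: it guarantees that $D$ has a $(k,H)$-kernel for every $k \geq r = 2$, which is exactly the claimed conclusion for every $k \geq 2$.

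Since the statement is a direct specialization of an already-established theorem, there is no real obstacle to overcome; the only point requiring any care is the purely definitional confirmation that the classes \emph{quasi-transitive} and $2$-\emph{quasi-transitive} coincide, which is immediate from the two definitions in the introduction. Consequently the proof should consist of a single sentence matching the hypotheses and a citation of Theorem \ref{tqtrc}.
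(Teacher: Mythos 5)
Your proof is correct and is exactly the paper's intended argument: the paper presents this statement as an immediate consequence of Theorem \ref{tqtrc}, obtained by observing that a quasi-transitive digraph is precisely a $2$-quasi-transitive digraph and then specializing $r=2$, so that the $(r+1)$-cycle hypothesis becomes the $3$-cycle hypothesis and the conclusion holds for every $k \geq 2$. Nothing is missing; your definitional check that the two classes coincide is the only step required.
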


\subsection{Multipartite tournaments}

The following lemma will be useful in order to show that every $H$-colored multipartite tournament has a $(k,H)$-kernel, for certain values of $k$.

\begin{lemma}
\label{distrpartite}
Let $D$ be an $H$-colored $r$-partite tournament, $k \geq 5$, and $(u,v) \in A(C_{H}^{k-1}(D))$. If $(u,v)$ is an asymmetric arc in $C_{H}^{k-1}(D)$, then $d_{D}(u,v) \leq 2$.
\end{lemma}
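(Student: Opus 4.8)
The plan is to argue by contradiction in the spirit of Corollary~\ref{geo3qt}: assuming $(u,v)$ is asymmetric in $C_{H}^{k-1}(D)$ but $d_{D}(u,v) \geq 3$, I will exhibit a $vu$-walk in $D$ of length at most $4$. Since $k \geq 5$ gives $k-1 \geq 4$, Lemma~\ref{walks}~(b) then forces $(v,u) \in A(C_{H}^{k-1}(D))$, contradicting the assumed asymmetry and yielding $d_{D}(u,v) \leq 2$.

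First I would record the reduction on which everything rests. Because $(u,v) \in A(C_{H}^{k-1}(D))$ there is a $uv$-path, so $d_{D}(u,v)$ is finite, and as $d_{D}(u,v) \geq 3$ the vertices $u$ and $v$ are non-adjacent; in an $r$-partite tournament any two non-adjacent vertices lie in a common partite class, say $V_{1}$. I then fix a $uv$-geodesic $T=(u=x_{0}, x_{1}, \ldots , x_{n}=v)$ with $n=d_{D}(u,v) \geq 3$ and use that every subpath of a geodesic is again a geodesic, so $d_{D}(x_{0},x_{i})=i$ and $d_{D}(x_{i},x_{n})=n-i$.

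The crux is a direction-forcing observation. For any index $i$ with $x_{i} \notin V_{1}$ (equivalently, $x_{i}$ adjacent to both $u$ and $v$), the arc between $u$ and $x_{i}$ must be $(x_{i},u)$ whenever $i \geq 2$ (otherwise $d_{D}(u,x_{i})=1 \neq i$), and the arc between $v$ and $x_{i}$ must be $(v,x_{i})$ whenever $i \leq n-2$ (otherwise $d_{D}(x_{i},v)=1 \neq n-i$). Hence, as soon as some index $i$ satisfies $2 \leq i \leq n-2$ and $x_{i} \notin V_{1}$, the walk $(v,x_{i},u)$ is a $vu$-walk of length $2$. For $n \geq 5$ such an index always exists: $x_{2}$ and $x_{3}$ are consecutive on $T$, hence adjacent and therefore in different partite classes, so at least one of them lies outside $V_{1}$, while both indices lie in the interval $[2,n-2]$.

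It remains to dispose of the small cases $n=3$ and $n=4$, which I expect to be the only fiddly part. For $n=3$ the walk $(v,x_{1},x_{2},u)$ works, combining $v \to x_{1}$, the geodesic arc $x_{1}\to x_{2}$, and $x_{2}=x_{n-1}\to u$, and it has length $3$. For $n=4$, if $x_{2}\notin V_{1}$ the length-$2$ walk $(v,x_{2},u)$ already applies, and if $x_{2}\in V_{1}$ the walk $(v,x_{1},x_{2},x_{3},u)$ of length $4$ does the job (using $v\to x_{1}$, the geodesic arcs $x_{1}\to x_{2}\to x_{3}$, and $x_{3}=x_{n-1}\to u$). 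In every case I obtain a $vu$-walk of length at most $4 \leq k-1$, and Lemma~\ref{walks}~(b) closes the contradiction. The main obstacle is not any single step but ensuring the direction-forcing is invoked only where the adjacency is guaranteed (i.e.\ $x_{i}\notin V_{1}$) and within the correct index ranges; once the ``common partite class'' reduction is in place, the exact geodesic distances do all the work.
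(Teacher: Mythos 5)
Your proof is correct and follows essentially the same route as the paper's: argue by contradiction, place $u$ and $v$ in a common partite class, use geodesic minimality to force the arcs $(v,x_{i})$ and $(x_{i},u)$ for suitable vertices outside that class, and produce a $vu$-walk of length at most $4 \leq k-1$ so that Lemma~\ref{walks}~(b) contradicts asymmetry. Your uniform ``direction-forcing'' observation and the treatment of $n \geq 5$ via the adjacent pair $x_{2},x_{3}$ is just a compact repackaging of the paper's argument that all internal vertices $x_{2},\ldots,x_{n-2}$ would lie in the common independent class, forcing $n=4$.
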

\begin{proof}
Let $\{ S_{1}, \ldots , S_{r} \}$ be the partition of $V(D)$ into independent sets. Proceeding by contradiction, suppose that $d_{D}(u,v) \geq 3$, and let $W=(u=x_{0}, \ldots , x_{n}=v)$ be a $uv-$geodesic in $D$. If $u \in S_{i}$ and $v \in S_{j}$ with $i \neq j$, then either $(u,v) \in A(D)$ or $(v,u) \in A(D)$. Since $d_{D}(u,v) \geq 3$, we conclude that $(v,u) \in A(D)$, which implies that $(v,u) \in A(C_{H}^{k-1}(D))$, contradicting the assumption on $(u,v)$. Hence, there exists $t \in \{1, \ldots , r \}$ such that $\{u,v \} \subseteq S_{t}$.  Consider the following cases.

\begin{description}
\item \textbf{Case 1}. $l(W)=3$

Since $D$ is an $r$-partite tournament, then either $(x_{1},v) \in A(D)$ or $(v,x_{1}) \in A(D)$. It follows from the fact that $W$ is a $uv$-geodesic that $(x_{1}, v) \notin A(D)$, which implies that $(v,x_{1}) \in A(D)$. A similar argument shows that $(x_{2},u) \in A(D)$. Hence, $(v,x_{1}, x_{2}, u)$ is a walk in $D$ with length at most $k-1$. By Lemma \ref{walks} (b) we have that $(v,u) \in A(C_{H}^{k-1}(D))$, contradicting the assumption on $(u,v)$, and the claim holds.

\item \textbf{Case 2.} $l(W) \geq 4$.

First, we will show that for every $i \in \{2, \ldots , n-2\}$, $x_{i} \in S_{t}$. Proceeding by contradiction, suppose that there exists $i \in \{ 2, \ldots , n-2 \}$ such that $x_{i} \notin S_{t}$. It follows from the fact that $D$ is an $r$-partite tournament that either $(u,x_{i}) \in A(D)$ or $(x_{i}, u) \in A(D)$. Since $W$ is a $uv$-geodesic, then $(u,x_{i} ) \notin A(D)$, which implies that $(x_{i}, u) \in A(D)$. A similar argument shows that $(v,x_{i}) \in A(D)$. Hence, $(v,x_{i},u)$ is a path in $D$ with length at most $k-1$ and, by Lemma \ref{walks} (b), we conclude that $(v,u) \in A(C_{H}^{k-1}(D))$, contradicting our assumption on $(u,v)$. Therefore, for every $i \in \{2, \ldots , n-2\}$, $x_{i} \in S_{t}$. Since $S_{t}$ is an independent set, we have that $n=4$.  

On the other hand, as $x_{1} \notin S_{t}$ and $D$ is an $r$-partite tournament, then we have that either $(v,x_{1}) \in A(D)$ or $(x_{1},v) \in A(D)$. Since $W$ is a $uv-$geodesic, then $(x_{1},v) \notin A(D)$, which implies that $(v,x_{1}) \in A(D)$. By applying an analogous argument on $x_{3}$ and $u$, we have that $(x_{3},u) \in A(D)$. Hence, $(v,x_{1}, x_{2}, x_{3}, u)$ is a $vu$-path with length at most $k-1$ and, by Lemma \ref{walks} (b), we conclude that $(v,u) \in A(C_{H}^{k-1}(D))$, contradicting the assumption on $(u,v)$
\end{description}

By the previous cases, we have that $d_{D}(u,v) \leq 2$. 
\end{proof}

\begin{theorem}
\label{panpart}
Let $D$ be an $H$-colored $r$-partite tournament. If $k \geq 5$, then $D$ has a $(k,H)$-kernel. 
\end{theorem}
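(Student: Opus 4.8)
The plan is to invoke Lemma \ref{closure} and reduce the statement to showing that $C_{H}^{k-1}(D)$ has a kernel; by Theorem \ref{duchet} it then suffices to prove that every cycle in $C_{H}^{k-1}(D)$ has a symmetric arc. So I would argue by contradiction: suppose $C=(u_{0},\ldots,u_{n})$ is a cycle in $C_{H}^{k-1}(D)$ with no symmetric arc, where $n\geq 3$ and indices are read modulo $n$. Since each arc $(u_{i},u_{i+1})$ is asymmetric, Lemma \ref{distrpartite} gives $d_{D}(u_{i},u_{i+1})\leq 2$, so I can fix a $u_{i}u_{i+1}$-geodesic $W_{i}$ in $D$ of length at most $2$ and form the closed walk $C'=\cup_{i=0}^{n-1}W_{i}=(x_{0},\ldots,x_{l})$ with $x_{0}=u_{0}=u_{l}$. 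Throughout I would lean on two facts: by Lemma \ref{walks} (b) any $D$-walk of length at most $4$ produces an arc of $C_{H}^{k-1}(D)$ (as $k\geq 5$), and by Lemma \ref{walks} (c) any closed $D$-walk of length at most $k$ forces a symmetric arc between any two of its vertices.

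The first step is to record the structure forced on consecutive cycle vertices by the $r$-partite tournament property together with the asymmetry assumption. If $u_{i}$ and $u_{i+1}$ lie in distinct parts, they are adjacent in $D$, and in fact $(u_{i},u_{i+1})\in A(D)$, because otherwise $(u_{i+1},u_{i})\in A(D)$ would already be an arc of $C_{H}^{k-1}(D)$ and make $(u_{i},u_{i+1})$ symmetric. If instead $u_{i}$ and $u_{i+1}$ lie in the same part $S_{t}$, then they are non-adjacent, so $W_{i}=(u_{i},m_{i},u_{i+1})$ with $m_{i}\notin S_{t}$. This dichotomy already disposes of the simplest configurations: if all $u_{i}$ lie in pairwise distinct parts then $C$ is literally a cycle of $D$, and as soon as its length is at most $k$ Lemma \ref{walks} (c) returns a symmetric arc; and if two consecutive vertices share a part, the intermediate vertex $m_{i}$ is joined to the neighbouring cycle vertices, so orienting those arcs via the partite property yields a backward $D$-walk of length at most $2$ that contradicts the asymmetry of some cycle arc.

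The heart of the proof is the general case, where I would imitate the maximal-index technique of Theorem \ref{pan3qtr}. Choosing a reference vertex, say $u_{1}$, I would set $q=\max\{\, i\in\{1,\ldots,l\}:(u_{1},x_{i})\in A(D)\,\}$, first show (as in Claim 1 there) that $q$ is bounded away from $l$, and then examine the short segment $x_{q},x_{q+1},\ldots$ of $C'$. Using that any two vertices lying in distinct parts of $D$ are adjacent, I would orient the relevant pairs and, by the maximality of $q$, force precisely the orientations that produce either a $u_{i+1}u_{i}$-walk of length at most $4$ in $D$ or a closed $D$-walk of length at most $k$ through two consecutive cycle vertices; in either case Lemma \ref{walks} (b) or (c) contradicts the assumption that $C$ has no symmetric arc. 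This bookkeeping — tracking which intermediate vertices $x_{j}$ share a part with which cycle vertices, and pinning down the forced orientations so that the backward walk stays within length $4$ — is the main obstacle, exactly as in the $3$-quasi-transitive argument, with the $r$-partite tournament property now playing the role that $3$-quasi-transitivity played there.

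Having shown that every cycle of $C_{H}^{k-1}(D)$ has a symmetric arc, Theorem \ref{duchet} produces a kernel of $C_{H}^{k-1}(D)$, and Lemma \ref{closure} then yields a $(k,H)$-kernel of $D$, as required.
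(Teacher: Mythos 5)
Your scaffolding is exactly the paper's: reduce via Lemma \ref{closure} and Theorem \ref{duchet} to showing every cycle of $C_{H}^{k-1}(D)$ has a symmetric arc, use Lemma \ref{distrpartite} to get geodesics $W_{i}$ of length at most $2$, concatenate them into $C'=(x_{0},\ldots,x_{l})$, and run the maximal-index technique with $q=\max\{i:(u_{1},x_{i})\in A(D)\}$. But the heart of the argument is left as a promissory note: ``force precisely the orientations that produce either a backward walk of length at most $4$ or a closed walk of length at most $k$'' is where all the work lies, and in the multipartite setting the forcing is \emph{not} automatic, because $u_{1}$ may share a part with $x_{q+2}$ or $x_{q+3}$ and hence be non-adjacent to them. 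Adjacency of the endpoints of a length-$3$ path is exactly what $3$-quasi-transitivity supplied for free in Theorem \ref{pan3qtr}, and it is the one ingredient missing here; your proposal never explains how to replace it. The paper bridges the gap with two concrete claims you do not establish: first, $(x_{q+2},u_{1})\in A(D)$ — proved by noting that if $u_{1}$ and $x_{q+2}$ lie in a common part $S_{m}$, then $x_{q+3}\notin S_{m}$, maximality of $q$ forces $(x_{q+3},u_{1})\in A(D)$, and the resulting $5$-cycle $(x_{q},x_{q+1},x_{q+2},x_{q+3},u_{1},x_{q})$ must contain two consecutive vertices of $C$ (since each $W_{i}$ has length at most $2$), so Lemma \ref{walks} (c) yields a symmetric arc (this is precisely where $k\geq 5$ enters); second, the identification $x_{q+1}=u_{t}$, $x_{q+3}=u_{t+1}$, extracted from the $4$-cycle $(u_{1},x_{q},x_{q+1},x_{q+2},u_{1})$, after which the final dichotomy on whether $u_{1}$ and $x_{q+3}$ share a part produces backward $u_{t+1}u_{t}$-walks of length $3$ or $4$. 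Without these steps the sketch is a plan, not a proof.

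Moreover, your warm-up ``disposals'' are both faulty. In the all-distinct-parts case you conclude that $C$ is a cycle of $D$ and that Lemma \ref{walks} (c) applies ``as soon as its length is at most $k$'' — but nothing bounds $n$ by $k$, and handling long cycles is exactly what the index $q$ is for, so this case is not disposed of at all. In the shared-part case, the intermediate vertex $m_{i}$ of $W_{i}=(u_{i},m_{i},u_{i+1})$ need not be adjacent to the neighbouring cycle vertices $u_{i-1}$ or $u_{i+2}$, since those may lie in the same part as $m_{i}$; no backward $D$-walk of length at most $2$ is forced, and the paper makes no such claim. These errors matter because they suggest the general bookkeeping would be carried out with the same unjustified adjacency assumptions, which is precisely the trap the paper's Claims 2 and 3 are designed to avoid.
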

\begin{proof}
First, we will show that every cycle in $C_{H}^{k-1}(D)$ has a symmetric arc, then, by applying Theorem \ref{duchet} and Lemma \ref{closure}, we will conclude that $D$ has a $(k,H)$-kernel. Proceeding by contradiction, suppose that there exists a cycle in  $C_{H}^{k-1}(D)$, say $C=(u_{0}, \ldots , u_{n})$, with no symmetric arcs.  For every $i \in \{0, \ldots , n-1\}$, consider a $u_{i}u_{i+1}$-geodesic in $D$, say $W_{i}$ (indices are taken modulo $n$). It follows from Lemma \ref{distrpartite} that for every $i \in \{ 0, \ldots , n-1\}$, $l(W_{i}) \leq 2$. Let $C'=\cup_{i=0}^{n-1}W_{i}$ and assume that $C' =(x_{0}, \ldots , x_{l})$. Notice that $C'$ is a closed walk and $x_{0}=x_{l} = u_{0}$.

\begin{description}
\item \textbf{Claim 1.} For every $i \in \{ l-3,l-2,l-1, l\}$, $(u_{1}, x_{i}) \notin A(D)$.

Proceeding by contradiction, suppose that $(u_{1}, x_{i}) \in A(D)$ for some $i \in \{ l-3, l-2,l-1,l\}$. Since $x_{l}=u_{0}$, we have that $i \neq l$. It follows that $(u_{1},x_{i}) \cup (x_{i}, C', x_{0})$ is a $u_{1}u_{0}$-walk in $D$ with length at most $k-1$, which implies that $(u_{1}, u_{0}) \in A(C_{H}^{k-1}(D))$  (Lemma \ref{walks} (b)), contradicting the choice of $C$.
\end{description}

On the other hand, since $u_{1} \in V(C')$, then $u_{1}$ has at least one out-neighbor in $C$. Consider $q=max \{ i \in \{ 1, \ldots , l \} : (u_{1}, x_{i}) \in A(D)\}$. Notice that $q \leq l-4$ (Claim 1).
In order to present our proof more compact, we will show the following claims.

	\begin{description}
	\item \textbf{Claim 2.} $(x_{q+2}, u_{1}) \in A(D)$.
	
	Proceeding by contradiction, suppose that $(x_{q+2}, u_{1}) \notin A(D)$. 
	By the choice of $q$, we have that $(u_{1}, x_{q+2}) \notin A(D)$, which implies that there exists $m \in \{1, \ldots , r \}$ such that $\{ u_{1}, x_{q+2} \} \subseteq S_{m}$. It follows from the fact that $D$ is an $r$-partite digraph, that either $(x_{q+3}, u_{1}) \in A(D)$ or $(u_{1}, x_{q+3}) \in A(D)$. By the choice of $q$, we conclude that $(x_{q+3},u_{1}) \in A(D)$. Notice that $(x_{q}, x_{q+1}, x_{q+2}, x_{q+3}, u_{1}, x_{q})$ is a $5$-cycle in $D$. 
Since every $u_{i}u_{i+1}$-geodesic has length at most $2$, then $(x_{q}, x_{q+1}, x_{q+2}, x_{q+3})$ has at least two consecutive vertices in $C$, say $u_{t}$ and $u_{t+1}$. It follows from Lemma \ref{walks} (c) that $(u_{t}, u_{t+1})$ is a symmetric arc in $C_{H}^{k-1}(D)$, which is no possible by the choice of $C$. Hence, the claim holds.	
\end{description}

	\begin{description}
	\item \textbf{Claim 3.} There exists $t \in \{0, \ldots , n-1\}$ such that $x_{q+1} = u_{t}$ and $x_{q+3} = u_{t+1}$.

First, we will show that  $(x_{q}, x_{q+1}, x_{q+2})$ has no two consecutive vertices of $C$...(*). Proceeding by contradiction, suppose that $(x_{q}, x_{q+1}, x_{q+2})$ has two consecutive vertices of $C$, say $u_{s}$ and $u_{s+1}$. Since $(u_{1}, x_{q}, x_{q+1}, x_{q+2}, u_{1})$ is a $4$-cycle in $D$, it follows from Lemma \ref{walks} (c) that $u_{r}$ and $u_{r+1}$ are joined by a symmetric arc in $C_{H}^{k-1}(D)$, which is no possible by the choice of $C$. Therefore, (*) holds.

On the other hand, from the choice of $C'$, we can conclude that there exists $t \in \{0, \ldots , n-1\}$ such that $A(W_{t}) \cap \{ (x_{q}, x_{q+1}),(x_{q+1}, x_{q+2}) \} \neq \emptyset$. Remark that $l(W_{t}) \leq 2$. It follows from (*) that $x_{q+1} = u_{t}$ and $x_{q+3} = u_{t+1}$, and the claim holds.
\end{description}
	
Now, consider the following cases:
	
	\begin{description}
	\item \textbf{Case 1.} $\{ u_{1}, x_{q+3} \} \subseteq S_{m}$ for some $m \in \{1, \ldots , r \}$.
	
	In this case, we have that $u_{1}$ and $x_{q+4}$ are adjacent in $D$, and by the choice of $q$, we can conclude that $(x_{q+4}, u_{1}) \in A(D)$. Hence, $(x_{q+3}, x_{q+4}, u_{1}, x_{q}, x_{q+1})$ is a $u_{t+1}u_{t}$-walk in $D$ with length at most $4$, which implies that $(u_{t+1}, u_{t}) \in A(C_{H}^{k-1}(D))$ (Lemma \ref{walks} (b)), contradicting the choice of $C$.
	
	\item \textbf{Case 2.} $u_{1} \in S_{m}$ and $x_{q+3} \in S_{m'}$ for some $\{m, m' \} \subseteq \{1, \ldots , r\}$, and $m \neq m'$.
	
	In this case, we have that $u_{1}$ and $x_{q+3}$ are adjacent in $D$, and by the choice of $q$, we conclude that $(x_{q+3}, u_{1}) \in A(D)$. Hence, $(x_{q+3}, u_{1}, x_{q}, x_{q+1})$ is a $u_{t+1}u_{t}$-walk in $D$ with length at most $3$, which implies that $(u_{t+1}, u_{t}) \in A(C_{H}^{k-1}(D))$ (Lemma \ref{walks} (b)), contradicting the choice of $C$.
	\end{description}

Therefore, we have that every cycle in $C_{H}^{k-1}(D)$ has a symmetric arc, which implies that $C_{H}^{k-1}(D)$ has a kernel (Theorem \ref{duchet}), concluding that $D$ has a $(k,H)$-kernel (Lemma \ref{closure}).
\end{proof}

\subsection{Local tournaments}

The following lemma will be useful in what follows.

\begin{lemma}\cite{localtour}
\label{localintour}
Every pair of vertices in each strong component of a local in-tournament (out-tournament) lie on a cycle.
\end{lemma}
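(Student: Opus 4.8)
The plan is to prove the statement for local in-tournaments; the local out-tournament case then follows by arc reversal, since reversing every arc of a local out-tournament produces a local in-tournament, preserves the strong components, and sends cycles to cycles. So I fix attention on a local in-tournament $D$ and a strong component $S$ of $D$. First I would observe that $S$, regarded as an induced subdigraph, is itself a strongly connected local in-tournament: for each $x \in S$ the in-neighborhood of $x$ inside $S$ is a subset of $N^{-}(x)$, and an induced subdigraph of a tournament is again a tournament. Thus it suffices to prove that in a strongly connected local in-tournament every two vertices lie on a common cycle. The structural tool I would use throughout is the defining property in the convenient form: if $(a,c) \in A(D)$ and $(b,c) \in A(D)$, then $a$ and $b$ are adjacent (two in-neighbors of a common vertex, hence two vertices with a common out-neighbor, are joined by an arc).

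Next I would fix $u \in V(S)$ and let $A_{u}$ be the set of vertices lying on a common cycle with $u$, together with $u$ itself; the goal becomes $A_{u} = V(S)$. Arguing by contradiction, I would choose $v \notin A_{u}$ with $d_{S}(u,v)$ minimum. If $d_{S}(u,v)=1$, then $(u,v) \in A(D)$, and joining this arc to a shortest $vu$-path (which is simple, hence internally disjoint from $u$ and $v$) yields a cycle through $u$ and $v$, so $v \in A_{u}$, a contradiction; hence $d_{S}(u,v) \geq 2$. Taking a shortest $uv$-path $(u, \ldots , x, v)$, its penultimate vertex $x$ satisfies $d_{S}(u,x) = d_{S}(u,v)-1$, so by minimality $x \in A_{u}$: there is a cycle $C$ through $u$ and $x$, and $(x,v) \in A(D)$. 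Since every vertex of $C$ lies on a cycle with $u$, we have $V(C) \subseteq A_{u}$ and in particular $v \notin V(C)$.

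The remaining task, and the main obstacle, is to build from $C$, the arc $(x,v)$, and the strong connectivity of $S$ a single cycle that passes through \emph{both} $u$ and $v$. Since $S$ is strong there is a path from $v$ back to $C$; I would take a shortest such path $R$, meeting $C$ only at its last vertex $z$. The difficulty is positional: going around $C$ from the entry point $z$ to $x$ and then closing up through $(x,v)$ need not traverse $u$. Here the in-tournament property does the real work: the predecessor of $z$ on $C$ and the predecessor of $z$ on $R$ are two in-neighbors of $z$, hence adjacent, and one orientation of that arc lets me slide the entry point of $R$ one step backward along $C$ without lengthening $R$. Iterating this sliding, I would move the entry point around $C$ until the arc of $C$ from the entry point to $x$ contains $u$; concatenating $R$, that arc of $C$, and $(x,v)$ then gives a cycle through $u$ and $v$, contradicting $v \notin A_{u}$.

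The delicate points I expect to have to settle carefully are, first, that the sliding step is always available: one must analyze the opposite orientation of the predecessor-arc and show that it either still permits progress or contradicts the minimality of $d_{S}(u,v)$ or the minimality of the length of $R$; and second, that the final concatenation is a genuine simple cycle (no unintended revisits among $R$, the traversed arc of $C$, and the closing arc). These case analyses, all powered by the common-out-neighbor adjacency of local in-tournaments, are the heart of the argument, whereas the reduction to the strong case and the extremal setup are routine.
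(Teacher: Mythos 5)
You should first know how your attempt relates to the paper: the paper does not prove this lemma at all. It is imported verbatim, with the citation \cite{localtour}, from Bang-Jensen, Huang and Prisner's work on in-tournament digraphs, so there is no internal proof to compare against. What you have written is therefore a from-scratch proof, and its skeleton is sound: the converse-digraph reduction of the out-tournament case, the restriction to a strong component (which is indeed again a local in-tournament), the extremal choice of $v \notin A_{u}$ at minimum distance, the cycle $C$ through $u$ and the penultimate vertex $x$, the shortest return path $R$ meeting $C$ only at $z$, and the use of the in-neighbor adjacency to reposition the entry point are all correct, and the argument can be completed along exactly these lines.

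One correction and one completion on the points you flagged as delicate. Let $z^{-}$ be the predecessor of $z$ on $C$ and $r$ the predecessor of $z$ on $R$; they are distinct (as $r \notin V(C)$) in-neighbors of $z$, hence adjacent. If $(r,z^{-}) \in A(D)$, your slide works verbatim: replace the last arc of $R$ by $(r,z^{-})$; the internal vertices are unchanged and still avoid $C$, so the new path meets $C$ only at $z^{-}$, and the entry point has moved one step backward along $C$. But if $(z^{-},r) \in A(D)$, you do \emph{not} obtain a contradiction with the minimality of $d_{S}(u,v)$ or of $l(R)$, as you anticipated might happen; the correct move is to \emph{enlarge the cycle}: replace the arc $(z^{-},z)$ of $C$ by the detour $(z^{-},r),(r,z)$, yielding a cycle through $V(C) \cup \{r\}$ (simple, since $r \notin V(C)$), and truncate $R$ at $r$. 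If $r=v$ this finishes the proof outright, since $v$ then lies on a cycle through $u$; otherwise the path has shortened by one while the number of backward steps along the cycle from the entry point to $u$ is unchanged, and a slide decreases that number by one while preserving the path length. Hence the sum of these two quantities strictly decreases at every step, so after finitely many steps either $v$ is absorbed into a cycle through $u$, or the entry point reaches $u$; in the latter case the concatenation of $(x,v)$, the current path, and the segment of the current cycle from the entry point through $u$ to $x$ is a genuine simple cycle through $u$ and $v$ (internal vertices of the path avoid the cycle, and $v$ is off the cycle), contradicting $v \notin A_{u}$. With this slide-or-extend dichotomy and the termination measure made explicit, your proof is complete and is very much in the spirit of the classical in-tournament arguments of the cited source.
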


As a consequence of the previous lemma, we have the following result in $H$-colored local in-tournaments.

\begin{theorem}
\label{localin}
Let $D$ be an $H$-colored local in-tournament. If every cycle in $D$ has $H$-length at most $k-2$ ($k \geq 2$), then $D$ has a $(k,H)$-kernel. 
\end{theorem}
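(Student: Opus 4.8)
The plan is to follow the same closure strategy used in the quasi-transitive, $3$-quasi-transitive, and multipartite cases: I would show that every cycle in $C_{H}^{k-1}(D)$ has a symmetric arc, and then invoke Theorem \ref{duchet} together with Lemma \ref{closure} to conclude that $D$ has a $(k,H)$-kernel. So the whole argument reduces to the symmetric-arc property of cycles in the closure.

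I would argue by contradiction. Suppose $C=(u_{0}, \ldots , u_{n})$ is a cycle in $C_{H}^{k-1}(D)$ with no symmetric arc; then $n \geq 3$. Fix one arc of $C$, say $(u_{0}, u_{1})$. By definition of the closure there is a $u_{0}u_{1}$-path in $D$ of $H$-length at most $k-1$; and concatenating the paths in $D$ that realize the remaining arcs $(u_{1},u_{2}), \ldots , (u_{n-1}, u_{0})$ of $C$ produces a $u_{1}u_{0}$-walk in $D$. Hence $u_{0}$ and $u_{1}$ are mutually reachable in $D$, that is, they lie in the same strong component. Now I would apply Lemma \ref{localintour}: since $u_{0}$ and $u_{1}$ belong to the same strong component of the local in-tournament $D$, they lie on a common cycle $Z$ in $D$, which I relabel so that it starts at $u_{0}$.

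By hypothesis $Z$ has $H$-length at most $k-2$, i.e. $|O_{H}(Z)| \leq k-2$ (recall $l_{H}(Z)=|O_{H}(Z)|$ for a closed walk). Splitting $Z$ at $u_{1}$, I extract the forward subpath $Z'$ running from $u_{1}$ back to $u_{0}$; since $Z$ begins at $u_{0}$, this $Z'$ is a genuine contiguous subpath $(u_{1}, Z, u_{0})$ of the cycle $Z$, so Lemma \ref{subpaths} gives $O_{H}(Z') \subseteq O_{H}(Z)$ and hence $|O_{H}(Z')| \leq k-2$. As $Z'$ is open, $l_{H}(Z') = |O_{H}(Z')|+1 \leq k-1$. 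Thus $Z'$ witnesses $(u_{1}, u_{0}) \in A(C_{H}^{k-1}(D))$, and together with $(u_{0}, u_{1}) \in A(C_{H}^{k-1}(D))$ this makes $(u_{0}, u_{1})$ a symmetric arc of $C_{H}^{k-1}(D)$, contradicting the choice of $C$.

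The reachability observation and the index bookkeeping in splitting $Z$ are routine. The single place that genuinely carries the hypothesis is the passage from ``$H$-length of $Z$ at most $k-2$'' to ``$H$-length of the return subpath $Z'$ at most $k-1$'': this is precisely where the $+1$ coming from the open-walk definition of $H$-length is absorbed by the slack between $k-2$ and $k-1$, and where Lemma \ref{subpaths} is indispensable, since a subpath cannot create new obstructions. I do not expect a serious obstacle here; the only subtlety is to take $Z'$ as an honest contiguous subpath of $Z$ so that Lemma \ref{subpaths} applies verbatim, which is the reason for relabeling $Z$ to begin at $u_{0}$.
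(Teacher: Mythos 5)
Your proposal is correct and takes essentially the same route as the paper's proof: both show that every cycle in $C_{H}^{k-1}(D)$ has a symmetric arc by using the concatenated closure paths to put $u_{0}$ and $u_{1}$ in one strong component, invoking Lemma \ref{localintour} to place them on a common cycle of $D$, applying Lemma \ref{subpaths} so the open return subpath has $H$-length at most $(k-2)+1=k-1$, and concluding via Theorem \ref{duchet} and Lemma \ref{closure}. The only differences are cosmetic: you frame the symmetric-arc step as a contradiction where the paper argues directly, and you relabel the cycle to start at $u_{0}$ rather than at $u_{1}$.
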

\begin{proof}
First, we will show that every cycle in $C_{H}^{k-1}(D)$ has a symmetric arc, then, by applying Theorem \ref{duchet} and Lemma \ref{closure}, we will conclude that $D$ has a $(k,H)$-kernel. 

Let $C=(u_{0}, \ldots , u_{n})$ be a cycle in $C_{H}^{k-1}(D)$, and for every $i \in \{0, \ldots , n-1 \}$, consider a $u_{i}u_{i+1}$-path with $H$-length at most $k-1$, say $T_{i}$ (indices are taken modulo $n$). Clearly, $C'=\cup _{i=0}^{n-1} T_{i}$ is a closed walk in $D$ containing $u_{0}$ and $u_{1}$, which implies that $u_{0}$ and $u_{1}$ lie in the same strong component of $D$. It follows from Lemma \ref{localintour} that there exists a cycle in $D$, say $C_{0}$, such that $\{ u_{0}, u_{1} \} \subseteq V(C_{0})$. Suppose that $C_{0} =(u_{1} = x_{0}, \ldots, u_{0}=x_{t} , \ldots , x_{l} )$. If  $C_{0} ' =(u_{1}, C_{0}, u_{0})$, then $|O_{H}(C_{0}')| \leq |O_{H}(C_{0})|$ (Lemma \ref{subpaths}). By hypothesis, we conclude that $|O_{H}(C_{0}')| \leq k-2$, which implies that $l_{H}(C_{0}') \leq k-1$. Hence, $(u_{1}, u_{0}) \in A(C_{H}^{k-1}(D))$, concluding that $C$ has a symmetric arc in $C_{H}^{k-1}(D)$. 

Therefore, we have that every cycle in $C_{H}^{k-1}(D)$ has a symmetric arc, which implies that $C_{H}^{k-1}(D)$ has a kernel (Theorem \ref{duchet}), concluding that $D$ has a $(k,H)$-kernel (Lemma \ref{closure}).
\end{proof}

\begin{theorem}
\label{localout}
Let $D$ be an $H$-colored local out-tournament. If every cycle in $D$ has $H$-length at most $k-2$ ($k \geq 2$), then $D$ has a $(k,H)$-kernel. 
\end{theorem}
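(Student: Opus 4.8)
The plan is to prove this exactly as the local in-tournament case (Theorem \ref{localin}), since the only structural input there is Lemma \ref{localintour}, which is stated for both local in-tournaments \emph{and} local out-tournaments. Concretely, I would again reduce the problem to showing that every cycle in the closure $C_{H}^{k-1}(D)$ contains a symmetric arc; once that is established, Theorem \ref{duchet} yields a kernel of $C_{H}^{k-1}(D)$, and Lemma \ref{closure} translates this into a $(k,H)$-kernel of $D$.

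To verify the symmetric-arc property, I would take an arbitrary cycle $C=(u_{0},\ldots,u_{n})$ in $C_{H}^{k-1}(D)$ and, for each $i\in\{0,\ldots,n-1\}$, choose a $u_{i}u_{i+1}$-path $T_{i}$ in $D$ with $l_{H}(T_{i})\leq k-1$ (indices mod $n$), which exists by the definition of the closure. The concatenation $C'=\cup_{i=0}^{n-1}T_{i}$ is then a closed walk in $D$ containing both $u_{0}$ and $u_{1}$, so these two vertices lie in the same strong component of $D$. This step is orientation-agnostic: the existence of a closed walk through both vertices forces them into a common strong component regardless of whether $D$ is an in- or out-tournament, so nothing in the argument needs to change.

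Next I would invoke Lemma \ref{localintour} in its out-tournament form to obtain a cycle $C_{0}$ in $D$ with $\{u_{0},u_{1}\}\subseteq V(C_{0})$. Writing $C_{0}=(u_{1}=x_{0},\ldots,u_{0}=x_{t},\ldots,x_{l})$ and setting $C_{0}'=(u_{1},C_{0},u_{0})$ (the $u_{1}u_{0}$-subpath of $C_{0}$), Lemma \ref{subpaths} gives $O_{H}(C_{0}')\subseteq O_{H}(C_{0})$, hence $|O_{H}(C_{0}')|\leq|O_{H}(C_{0})|$. Since $C_{0}$ is a cycle, its $H$-length equals $|O_{H}(C_{0})|$, and the hypothesis bounds this by $k-2$; because $C_{0}'$ is open, $l_{H}(C_{0}')=|O_{H}(C_{0}')|+1\leq (k-2)+1=k-1$. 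Therefore $(u_{1},u_{0})\in A(C_{H}^{k-1}(D))$, so $C$ carries a symmetric arc.

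Honestly, I expect no genuine obstacle here: the proof is a verbatim mirror of Theorem \ref{localin}, and the only point worth a moment's care is confirming that the two places invoking directionality—the passage to a common strong component and the appeal to Lemma \ref{localintour}—are both valid for out-tournaments. The first is a generic property of closed walks, and the second is explicitly covered by the parenthetical statement of Lemma \ref{localintour}. Consequently the $H$-length bookkeeping ($|O_{H}|$ for the closed cycle versus $|O_{H}|+1$ for its open subpath) is the substance of the argument, and it is identical in both cases.
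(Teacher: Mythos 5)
Your proposal is correct and is exactly the paper's intended argument: the paper proves Theorem \ref{localout} by stating only that it follows by a proof analogous to Theorem \ref{localin}, and your write-up spells out precisely that analogue, correctly noting that Lemma \ref{localintour} is stated for out-tournaments as well and that the strong-component step via the concatenated closed walk is orientation-agnostic. The $H$-length bookkeeping ($l_{H}(C_{0})=|O_{H}(C_{0})|\leq k-2$ versus $l_{H}(C_{0}')=|O_{H}(C_{0}')|+1\leq k-1$ via Lemma \ref{subpaths}) matches the paper's computation verbatim.
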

\begin{proof}
An analogous proof as in Theorem \ref{localin} will show Theorem \ref{localout}.
\end{proof}

\begin{cor}
Let $D$ be an $H$-colored local tournament. If every cycle in $D$ has $H$-length at most $k-2$ ($k \geq 2$), then $D$ has a $(k,H)$-kernel. 
\end{cor}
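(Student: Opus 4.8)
The plan is to recognize that this corollary is an immediate consequence of the two theorems just established for the one-sided classes, so essentially no new work is required. By the definition given in the introduction, a \emph{local tournament} is a digraph that is simultaneously a local in-tournament and a local out-tournament. In particular, any local tournament $D$ is a local in-tournament.

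First I would invoke this defining property to view $D$ as a local in-tournament. Next I would check that the standing hypothesis of the corollary---that every cycle in $D$ has $H$-length at most $k-2$, with $k \geq 2$---is precisely the hypothesis required by Theorem \ref{localin}. Since the two hypotheses coincide, Theorem \ref{localin} applies directly and guarantees that $D$ has a $(k,H)$-kernel, which is exactly the desired conclusion.

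The main (and only) point to verify is this matching of hypotheses; there is no genuine obstacle here, since we need no structural property of a local tournament beyond its being a local in-tournament. For symmetry one could equally well regard $D$ as a local out-tournament and appeal to Theorem \ref{localout}, obtaining the same conclusion by an identical argument. Thus the corollary follows in a single line once the definitions are unwound.
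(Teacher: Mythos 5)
Your proposal is correct and matches the paper's intent exactly: the corollary is stated without proof precisely because, as you observe, a local tournament is by definition a local in-tournament, so Theorem \ref{localin} (or symmetrically Theorem \ref{localout}) applies verbatim under the identical hypothesis on cycle $H$-lengths.
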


\section{A brief note on $(k, H)$-panchromatic digraphs}

Let $D$ be a digraph and $k \geq 2$, we say that $D$ is a \emph{$(k,H)$-panchromatic digraph} if for every digraph $H$ (possibly with loops), and every $H$-coloring of $D$, $D$ has a $(k,H)$-kernel. Previous work on panchromaticity in digraphs can be found in \cite{pan2}, \cite{pan1}, \cite{pan4} and \cite{pan3}. As a direct consequence of the results proved in this paper, we have that the following nearly tournaments are $(k,H)$-panchromatic for certain values of $k$. 

\begin{theorem}
If $D$ is a digraph, then $D$ is $(k, H)$-panchromatic provided that:
\begin{enumerate}[(i)]
\item (Theorem \ref{tour1}) $D$ is semicomplete and $k \geq 3$.

\item (Corollary \ref{pantrans})  $D$ is transitive and $k \geq 2$.

\item (Corollary \ref{panrtrans})  $D$ is $r$-transitive ($r \geq 2$) and $k \geq r$.

\item (Theorem \ref{panqker}) $D$ is quasi-transitive and $k \geq 4$

\item (Theorem \ref{pan3qtr}) $D$ is $3$-quasi-transitive and $k \geq 5$.

\item (Theorem \ref{panpart}) $D$ is an $r$-partite tournament ($r \geq 2$) and $k \geq 5$.
\end{enumerate}
\end{theorem}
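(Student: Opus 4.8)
The plan is to recognize this statement as a direct repackaging of the six existence results already established, exploiting a single structural feature they all share: their hypotheses restrict only the underlying digraph $D$ and the value of $k$, never the auxiliary digraph $H$ nor the coloring $\rho$. First I would pin down the property to be verified: by definition $D$ is $(k,H)$-panchromatic precisely when, for \emph{every} digraph $H$ (possibly with loops) and \emph{every} $H$-coloring $\rho : A(D) \to V(H)$, the resulting $H$-colored digraph admits a $(k,H)$-kernel. Thus, to prove panchromaticity it suffices to exhibit, for each admissible pair $(D,k)$ listed, one earlier theorem whose conclusion is ``$D$ has a $(k,H)$-kernel'' and whose hypotheses hold regardless of the choice of $H$ and $\rho$.

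Then I would dispatch the six cases uniformly. For (i), given any semicomplete $D$ and any $k \geq 3$, Theorem \ref{tour1} applies to the $H$-colored digraph $D$ for arbitrary $H$ and arbitrary $\rho$, producing a $(k,H)$-kernel; since $H$ and $\rho$ were arbitrary, $D$ is $(k,H)$-panchromatic by definition. Cases (ii), (iii), (iv), (v) and (vi) have exactly the same shape, invoking respectively Corollary \ref{pantrans}, Corollary \ref{panrtrans}, Theorem \ref{panqker}, Theorem \ref{pan3qtr} and Theorem \ref{panpart}; in each, the hypothesis is a purely structural property of $D$ (transitive, $r$-transitive, quasi-transitive, $3$-quasi-transitive, or $r$-partite tournament) together with the stated lower bound on $k$, so the corresponding $(k,H)$-kernel is guaranteed for every $H$-coloring.

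The only point that demands care — and the reason the list contains exactly these six items and no more — is to confirm that each cited theorem is genuinely coloring-independent. This is what distinguishes them from the paper's other existence results: Theorem \ref{tour2}, Theorem \ref{tqtrc}, and the local in/out-tournament theorems (Theorems \ref{localin} and \ref{localout}) each impose a condition on the coloring itself (for instance, that every $3$-cycle, or every cycle of length $r+1$, be an $H$-cycle, or that every cycle have $H$-length at most $k-2$), and such conditions can fail for a suitably chosen $H$ and $\rho$. Those classes are therefore deliberately excluded from the panchromaticity statement. Accordingly, the ``hard part'' here is not any computation but the bookkeeping observation that the six invoked results carry no assumption whatsoever on $H$ or $\rho$; once that is noted, the conclusion for each item is immediate from the corresponding theorem together with the definition of $(k,H)$-panchromaticity.
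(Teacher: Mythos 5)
Your proposal is correct and matches the paper's approach exactly: the paper offers no separate argument, treating the theorem as a direct consequence of the six cited results, whose hypotheses constrain only $D$ and $k$ and therefore hold for every choice of $H$ and coloring $\rho$. Your added observation that the excluded results (Theorems \ref{tour2}, \ref{tqtrc}, \ref{localin}, \ref{localout}) fail to be coloring-independent is a sound explanation of why the list stops where it does.
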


\section*{Acknowledgments}
Hortensia Galeana-Sánchez is supported by CONACYT FORDECYT-PRONACES/39570/2020 and UNAM-DGAPA-PAPIIT IN102320. 
Miguel Tecpa-Galván is supported by CONACYT-604315.

\end{document}